\documentclass[11pt,reqno]{amsart}
\usepackage[T1]{fontenc}
\usepackage{lmodern}
\usepackage{amsmath,amssymb,amsthm,mathtools}
\usepackage[margin=1.05in]{geometry}
\usepackage{microtype}
\usepackage[colorlinks=true,linkcolor=blue,citecolor=blue,urlcolor=blue]{hyperref}
\hypersetup{
  pdftitle={Prime approximation on irrational lines},
  pdfauthor={Zirui Jiang},
  pdfsubject={Metric Diophantine approximation on irrational lines},
  pdfkeywords={prime approximation, irrational lines, Selberg sieve}
}
\numberwithin{equation}{section}
\newtheorem{theorem}{Theorem}[section]
\newtheorem{proposition}[theorem]{Proposition}
\newtheorem{lemma}[theorem]{Lemma}

\theoremstyle{remark}

\newcommand{\PP}{\mathbb P}
\newcommand{\RR}{\mathbb R}
\newcommand{\ZZ}{\mathbb Z}
\newcommand{\NN}{\mathbb N}

\newcommand{\ind}{\mathbf 1}

\newcommand{\dist}[1]{\lVert #1\rVert}

\title{Metric Diophantine approximation with primes on irrational lines}
\author{Zirui Jiang}
\address{Department of Mathematics and Statistics, University of Nevada, Reno, USA}
\email{ziruij@unr.edu}
\subjclass[2020]{Primary 11J83; Secondary 11J25, 11N36}
\keywords{Metric Diophantine approximation, prime numerator and denominator, irrational lines, Selberg sieve}

\begin{document}
\begin{abstract}
Let $c>1$ be irrational. We prove that, for almost every $x>0$
and every $\varepsilon>0$, there are infinitely many triples
$(p,r,s)$, with $p$ and $r$ prime and $s$ an integer, for which
\[
0<px-r<p^{-1/3+\varepsilon},\qquad
0<pcx-s<p^{-1/3+\varepsilon}.
\]
This improves the exponent $1/5$ in the homogeneous-line case
of a theorem of Baier and Ghosh. The arithmetic input is the
quantitative prime-counting lower bound in Matom\"aki's proof
concerning primes in shrinking neighborhoods of an irrational
rotation. We obtain a local first-moment lower bound by fixing
the prime numerator and applying the prime number theorem to
the admissible prime denominators. Two Selberg sieve squares,
together with explicit trigonometric interval approximations,
give a pointwise upper bound whose error tends to zero in
$L^1$ after normalization. A truncation and density-point
argument then yields the almost-everywhere conclusion.
\end{abstract}
\maketitle

\section{Introduction}

Let $\PP$ denote the set of positive rational primes. Metric Diophantine approximation with restrictions on both the numerator and the denominator has been studied by Harman \cite{Harman1988} and Jones \cite{Jones2001}. Imposing an additional relation between the coordinates leads to approximation on a lower-dimensional set. Harman and Jones \cite{HJ2002} treated certain nonlinear curves with three prime variables. For the line $(x,cx)$, Baier and Ghosh \cite[Theorem~1.2]{BG2015} proved that, for every irrational $c>1$ and every $\varepsilon>0$, almost every positive $x$ admits infinitely many solutions of
\begin{equation}\label{eq:bg}
0<px-r<p^{-1/5+\varepsilon},\qquad
0<pcx-s<p^{-1/5+\varepsilon},
\end{equation}
where $p,r\in\PP$ and $s\in\ZZ$. Their later work \cite{BG2017} considers restricted simultaneous approximation in higher dimensions. Inhomogeneous analogues are investigated in \cite{BG2018}.

We improve the exponent in \eqref{eq:bg} for homogeneous lines.

\begin{theorem}\label{thm:main}
Let $c>1$ be irrational. There is a Lebesgue-null set $E_c\subset(0,\infty)$ such that, for every $x\in(0,\infty)\setminus E_c$ and every $\varepsilon>0$, there exist infinitely many triples
\[
(p,r,s)\in\PP\times\PP\times\ZZ
\]
such that
\begin{equation}\label{eq:main}
0<px-r<p^{-1/3+\varepsilon},\qquad
0<pcx-s<p^{-1/3+\varepsilon}.
\end{equation}
\end{theorem}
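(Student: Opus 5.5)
The plan follows the route sketched in the abstract: a local second-moment (Chung--Erd\H{o}s / Kochen--Stone type) argument on short intervals of $x$, combined with a density-point argument. Fix $\varepsilon>0$ (a countable sequence $\varepsilon_j\to0$ suffices), a compact interval $I=[a,b]\subset(0,\infty)$, and write $\psi(p)=p^{-1/3+\varepsilon}$. For each prime $p$ in a dyadic range $(P,2P]$ and each admissible prime $r$ let
\[
A_{p,r}=\{x\in I:\ 0<px-r<\psi(p),\ 0<pcx-s<\psi(p)\ \text{for some } s\in\ZZ\}.
\]
Since $x\in A_{p,r}$ forces $x\in (r/p,\,(r+\psi(p))/p)$, the second condition reads $\{cr+cp\,t\}\in(1-\psi(p),1)$ modulo small errors, with $t=px-r\in(0,\psi(p))$. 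Let $S_N(x)=\sum_{P<p\le 2P}\sum_r \ind_{A_{p,r}}(x)$ over the dyadic block. I would show that for every subinterval $J\subset I$ one has $\int_J S_N\gg |J|\,\Sigma_N$ with $\Sigma_N=\sum_{p}\psi(p)^2\pi(pb)/p\asymp P^{1/3+2\varepsilon}/\log^2P\to\infty$, and that $\int_J S_N^2\le (1+o(1))(\int_J S_N)^2/|J| + o(\ldots)$ in a suitably normalized sense, yielding $|J\cap\limsup A|\gg|J|$ and hence full measure by Lebesgue density.

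For the first moment I would reverse the roles as in the abstract: fix the prime numerator $r$ and count primes $p$ with $p\approx r/x$ such that $\{cr/x\cdot\ldots\}$, i.e.\ the condition $\dist{c r + \text{small}}$-type constraint on $pcx$, holds. More concretely, integrating over $x\in J$, $|A_{p,r}\cap J|\asymp \psi(p)^2/p$ times the indicator that $cr$ lies (mod 1) in a shrinking window determined by $p$; summing over primes $r$ in a range one needs primes $r$ with $\{cr\}$ in an interval of length $\asymp\psi(p)$, and this is exactly Matom\"aki's quantitative lower bound for primes in shrinking neighborhoods of the rotation by $c$, valid for window lengths down to $r^{-1/3+\varepsilon}$. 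This is precisely where the exponent $1/3$ enters. Then the sum over $p$ is handled by the prime number theorem for the admissible denominators.

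For the second moment I would bound $\ind_{p,r\in\PP}$ from above by two Selberg sieve weights $(\sum_{d\mid p,\,d\le D}\lambda_d)^2(\sum_{e\mid r,\,e\le D}\lambda_e)^2$ with small level $D=P^{\delta}$, replacing the pairs $(p,r)$, $(p',r')$ by integers in arithmetic progressions. The overlap $|A_{p,r}\cap A_{p',r'}|$ is bounded by the length of intersection of two short intervals times the indicator of simultaneous conditions on $\{cq x\}$; I would majorize the interval indicators with explicit trigonometric polynomials (Vaaler/Selberg majorants), reducing the count to exponential sums $\sum e(h c\,q x)$ over $q$ in progressions, whose nonzero frequencies are controlled by the irrationality of $c$ (the $L^1$-in-$x$ average kills them, giving $o(1)$ after normalization rather than a pointwise bound, which is why only an $L^1$ statement is claimed). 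The diagonal and near-diagonal terms $p=p'$ contribute $\ll\int S_N$, negligible since $\Sigma_N\to\infty$; the off-diagonal main term is $(1+o(1))$ times the singular-series product, which matches the square of the first moment up to a constant factor $C$ (the sieve loses a constant), and a constant loss is enough for Kochen--Stone.

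The main obstacle I expect is the second moment: making the sieve upper bound match the Matom\"aki lower bound up to a constant uniformly over short intervals $J$, since the lower bound is only available on average and the upper bound must handle correlations between $\{cpx\}$ and $\{cp'x\}$ for unrelated $p,p'$. I would first try to show the normalized error $\Sigma_N^{-2}\int_I|S_N^2-\text{main}|\,dx\to0$ along a sparse subsequence $P=2^{k}$, then take a truncation to make the events in distinct blocks quasi-independent, and finally apply the density-point argument to conclude that the complement of $\limsup$ has no density points, giving the null set $E_c$ as a countable union over $\varepsilon_j$ and intervals $I$.
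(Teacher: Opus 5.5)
The genuine gap is your second-moment step; a separate problem is the choice of scales. Your first-moment step is essentially the paper's Proposition~\ref{prop:lower}: fix a prime numerator $r$ with $\dist{cr}$ small, then count prime denominators $p\approx r/x$ by the prime number theorem. There is one slip: $|A_{p,r}|\asymp\psi(p)/p$ times the indicator of $\dist{cr}\lesssim\psi(p)$, not $\psi(p)^2/p$. The second factor $\psi$ comes from the density of good $r$.

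The bound $\int_J S_N^2\ll|J|\Sigma_N^2$ is asserted, not proved, and the tools you list do not give it near exponent $1/3$. Done directly, the off-diagonal terms require counting $(p,p',r,r')$ with $|rp'-r'p|\lesssim\psi P$ and both $\dist{cr},\dist{cr'}\lesssim\psi$. That is a correlation statement for Matom\"aki's condition at linked primes, and nothing in your plan addresses it. Done via the sieve majorant $\tilde S=M+R$, with $M$ the constant Selberg main term, you would need $\int_J R^2=O(M^2)$. But $R$ combines $\gg\eta^{-2}$ geometric sums $\sum_{j\le X}e(j\xi x)$ with coefficients of size $\eta^2$. Each has $L^1([A,B])$-norm $\ll(1+|\xi|^{-1})\log X$ (Lemma~\ref{lem:geometric}), but $L^2$-norm of order $\sqrt X$ for typical $\xi$. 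The triangle inequality in $L^2$ therefore gives only $\|R\|_2\ll X^{1/2+O(\delta)}$ against $M\approx X^{1-2\kappa}$ (up to logarithms). That suffices only for $\kappa<1/4$; reaching $1/3$ would need cancellation among the frequencies, which you do not supply.

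The missing idea is that no second moment is needed. Proposition~\ref{prop:upper} gives the pointwise bound $F_X\le KG_X+J_X$ on $[A,B]$. Here $KG_X$ dominates the constant main term $N\eta^2/\mathcal G(z)^2$ of the two Selberg squares, and $J_X$ is a weighted sum of the $|R_{u,v}|$. So only $L^1$ norms of single geometric sums enter. The resulting requirement, $\eta Qz^{28}(\log X)^2=o(X\eta^2(\log X)^{-2})$ with $Q=X^{(1+\kappa)/2}$, is exactly $\kappa<1/3$. The truncation $h_X=\min\{F_X/G_X,K\}$ then satisfies $0\le h_X\le K$ and $\int_I h_X\ge c_0|I|-o(1)$, and this replaces Paley--Zygmund. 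On the set $E$ where $F_X$ eventually vanishes, dominated convergence gives $c_0|I|\le K|I\setminus E|$ for every $I$, so $E$ has no density points (Lemma~\ref{lem:metric}).

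On the scales: both inputs must run along $X=Q^{2/(1+\kappa)}$ with $Q$ a convergent denominator of $c$, not along dyadic $P=2^k$. If $a/q$ and $a'/q'$ are consecutive convergents and $(2q)^{1/\kappa}<P<q'/4$, then every prime $r\in(P,2P]$ has $\dist{cr}\ge1/q-2P/(qq')\ge1/(2q)>P^{-\kappa}$. So for Liouville-type $c$ your first moment vanishes on long ranges of blocks. Likewise, irrationality of $c$ alone gives no quantitative lower bound for $|h/v+ck|$; the paper obtains $|h+cvk|\ge1/(2Q)$ from the same convergent, using $2vD\le Q$.
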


The main arithmetic input is due to Matom\"aki
\cite{Matomaki2009}. For each fixed $0<\kappa<1/3$, her argument
provides a dyadic lower bound for primes $r\in(X,2X]$ satisfying
$\dist{cr}\ll X^{-\kappa}$, along scales determined by convergents
to $c$. To obtain a lower bound on a fixed subinterval
$I\subset[A,B]$, we first fix such a prime numerator $r$ and let
the prime denominator vary over an interval depending on $r$
and $I$. The prime number theorem, combined with the lengths
of the resulting approximation intervals, gives the local
first-moment estimate in Proposition~\ref{prop:lower}.
This lower bound is proportional to $|I|$, with a positive
constant independent of $I$, although the threshold for $X$
may depend on $I$. In particular, no asymptotic formula for
the good prime numerators in smaller intervals is required.

For the upper bound, we estimate the integrated error in
counting simultaneous fractional-part conditions with
divisibility restrictions. A normalized high power of a
geometric sum supplies interval majorants and minorants,
while the convergent approximation to $c$ controls the
nonzero Fourier frequencies within the chosen degree range.
Two explicit Selberg squares majorize the prime restrictions
on the denominator and the first numerator. With the notation
introduced below, Proposition~\ref{prop:upper} gives
\[
\frac{F_X(x)}{G_X}\le K+\frac{J_X(x)}{G_X},
\qquad
\left\|\frac{J_X}{G_X}\right\|_{L^1([A,B])}\longrightarrow0
\]
along the convergent scales. Truncating $F_X/G_X$ at $K$
therefore changes its local integrals by a quantity tending
to zero. The local lower bound and the density-point argument
in Lemma~\ref{lem:metric} then yield the almost-everywhere
conclusion. This is the same general metric strategy used
in \cite{HJ2002,BG2015}; the counting order and the arithmetic
scale are different.

Throughout, $e(t)=\exp(2\pi i t)$, $\{t\}=t-\lfloor t\rfloor$, and $\dist{t}$ is the distance from $t$ to the nearest integer. We write $[d,e]$ for the least common multiple. The symbols $\mu$ and $\varphi$ denote the M\"obius and Euler functions. All integrals and measures are with respect to Lebesgue measure.

\section{The arithmetic input and the counting scale}

We record the quantitative consequence of Matom\"aki's argument that will be used below. It is important here to use the prime-counting lower bound in her proof, rather than only the infinitude assertion of her theorem.

\begin{lemma}[Matom\"aki's prime-counting input]\label{lem:matomaki}
Let $c$ be irrational and $0<\kappa<1/3$ be fixed. If $a/Q$ is a convergent to $c$ and
\begin{equation}\label{eq:scale}
X=Q^{2/(1+\kappa)},
\end{equation}
then, as $Q$ tends to infinity through convergent denominators,
\begin{equation}\label{eq:primeinput}
\#\{r\in\PP:X<r\le2X,\ \dist{cr}\le5X^{-\kappa}\}
\gg_{c,\kappa}\frac{X^{1-\kappa}}{\log X}.
\end{equation}
\end{lemma}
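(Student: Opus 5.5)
This is Matomäki's quantitative lower bound, so the plan is to retrace her argument (Matomäki, 2009) and check that her proof gives \eqref{eq:primeinput} at exactly the scale \eqref{eq:scale}.

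\emph{Setup.} Fix $\kappa<1/3$, a convergent $a/Q$ with $|c-a/Q|\le Q^{-2}$, and put $\delta=X^{-\kappa}$. Choose a smooth (or Vinogradov-type) weight $\psi\ge0$ supported in $\|t\|\le 5\delta$, with $\psi\gg1$ on $\|t\|\le\delta$ and Fourier coefficients $\hat\psi(h)\ll\delta$ that are negligible for $|h|>H:=\delta^{-1}X^{\eta}$. It then suffices to show
\[
S=\sum_{X<n\le 2X}\Lambda(n)\psi(cn)\gg \delta X .
\]
Prime powers contribute at most $O(X^{1/2})$, and dividing by $\log X$ then gives \eqref{eq:primeinput}.

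\emph{Reduction to exponential sums.} Expanding $\psi$ in Fourier series, $S=\hat\psi(0)\sum\Lambda(n)+\sum_{0<|h|\le H}\hat\psi(h)\sum_n\Lambda(n)e(hcn)+$ negligible. The main term is $\asymp\delta X$. Estimating the error directly would require
\[
\sum_{0<|h|\le H}\Bigl|\sum_n\Lambda(n)e(hcn)\Bigr|=o(X).
\]
Vaughan's identity combined with the large sieve along the convergent $a/Q$ gives this only for $\kappa<1/4$ (Harman's range). To reach $\kappa<1/3$, I would follow Matomäki's key step. Rather than an asymptotic, she proves a \emph{lower bound}. She applies Harman's sieve: $\Lambda$, or the prime indicator, is decomposed into Type I and Type II pieces. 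The Type I/II sums are those with an arithmetic structure whose twisted sums over $h\le H$ and $n\sim X$ can be bounded using $Q\asymp X^{(1+\kappa)/2}$. Each bound reduces to the bilinear estimate
\[
\sum_{h\le H}\Bigl|\sum_{m\sim M}\sum_{k\sim X/M}a_mb_ke(hcmk)\Bigr| \ll X^{1-\eta'} ,
\]
valid in a range of $M$ determined by $H$ and $Q$. The remaining pieces are discarded by positivity, and the discarded part is shown numerically to be smaller than the main term. Here the scale choice matters: the large sieve for $\sum_h|\sum e(hcmk)|$ gains through spacing of $hcm$ modulo $1$, which is controlled by $\|qc\|$ with $q\approx Q$. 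The condition $Q^2=X^{1+\kappa}$ balances $H\asymp X^{\kappa}$ against $X$ so that the Type II range covers $[X^{\kappa+\eta},X^{1/2}]$ roughly.

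\emph{Main obstacle.} The hard part is the Harman-sieve bookkeeping. One must check that for every $\kappa<1/3$ the discarded sums are a proportion strictly less than $1$ of the main term, and that the constants depend only on $(c,\kappa)$, uniformly as $Q\to\infty$ along convergents. I would not reprove this. Instead I would cite the explicit inequality in Matomäki's proof, stated as $\sum_{n\sim X,\ \|cn\|\le\delta}\ind_{\PP}(n)\ge(C(\kappa)+o(1))\delta X/\log X$ with $C(\kappa)>0$. The verification then consists of three checks: her $X$ is tied to $Q$ exactly as in \eqref{eq:scale}, up to a constant factor absorbed by the dyadic range; her neighborhood width is $\ll X^{-\kappa}$, with the factor $5$ giving room; and the lower bound holds for every large convergent denominator, not just a subsequence.
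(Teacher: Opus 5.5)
There is a genuine gap. Your plan rests on citing an inequality of the form $\sum_{n\sim X,\ \dist{cn}\le\delta}\ind_{\PP}(n)\gg\delta X/\log X$. Matom\"aki's proof contains no such statement. Getting one from what she does prove is exactly what your ``three checks'' skip.

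Her argument replaces $c$ by the convergent $a/Q$ from the start. With $R=X^{(1-\kappa)/2}$, she gives a lower bound for a weighted count of primes $n\in(X,2X]$ with $(n,Q)=1$. The weight is a smooth $h$, $0\le h\le 1$, supported on $[R/2,3R]$ and evaluated at the residue of $an$ modulo $Q$. Her final bound is $\gg RX/(Q\log X)=X^{1-\kappa}/\log X$. Your sketch via frequencies $hc$ and large-sieve spacing of $hcm$ is not how her argument runs, but you do not rely on it.

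To reach \eqref{eq:primeinput} you need two steps. First, remove the weight. Since $h\le1$, $h$ is supported in $[1,3R]$, and $3R<Q$ for large $X$, each prime has at most one residue representative there. So the weighted count is at most the number of primes with $an\bmod Q\in[1,3R]$. Second, transfer from $a/Q$ to $c$:
\[
\dist{cr}\le\Bigl|c-\frac aQ\Bigr|r+\dist{\frac{ar}{Q}}\le\frac{2X}{Q^2}+\frac{3R}{Q}=2X^{-\kappa}+3X^{-\kappa}.
\]

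This computation is where both the scale \eqref{eq:scale} and the constant $5$ come from, so your justifications for them do not hold up.

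The $5$ is not slack over a width ``$\ll X^{-\kappa}$''. An unspecified implied constant would not give $\dist{cr}\le 5X^{-\kappa}$ as the statement requires. The explicit constant is also used later: Proposition~\ref{prop:lower} needs $|e_r|\le\eta/4$ with $\eta=20X^{-\kappa}$.

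Nor is a constant-factor mismatch between her $X$ and $Q^{2/(1+\kappa)}$ ``absorbed by the dyadic range''. Such a mismatch changes which dyadic interval is counted. It also rescales the term $|c-a/Q|\,r\le 2X/Q^2$, and with it the width constant.

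Your third check, that every sufficiently large convergent denominator is admissible, is correct and matches the paper's remark.
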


\begin{proof}[Derivation from \cite{Matomaki2009}]
Put $R=X^{(1-\kappa)/2}$. The set introduced in \cite[\S1, (1.2)]{Matomaki2009} consists of integers $n\in(X,2X]$, coprime to $Q$, for which $an$ has a residue representative in $[1,3R]$. In \cite[\S2]{Matomaki2009}, this condition is assigned a smooth weight $h$, with $0\le h\le1$, supported on $[R/2,3R]$. The final lower bound in \cite[\S5, p.~283]{Matomaki2009} is
\[
\gg \frac{RX}{Q\log X}=\frac{X^{1-\kappa}}{\log X}
\]
for the resulting weighted prime count. For large $X$, the support of $h$ is contained in $[1,3R]$ and $3R<Q$. Each prime therefore has at most one admissible residue representative and weight at most one, so the same unweighted lower bound follows. For every counted prime $r$,
\[
\dist{cr}\le \left|c-\frac aQ\right|r+\dist{\frac{ar}{Q}}
\le\frac{2X}{Q^2}+\frac{3R}{Q}=5X^{-\kappa}.
\]
This proves \eqref{eq:primeinput}. The argument in the cited paper starts with an arbitrary sufficiently large convergent denominator, so the same scales are available here.
\end{proof}

Fix, for the remainder of the proof,
\begin{equation}\label{eq:fixed}
0<A<B,\qquad c>1\text{ irrational},\qquad 0<\kappa<1/3.
\end{equation}
Write $a_n/Q_n$ for the continued-fraction convergents to $c$, in lowest terms. We use
\[
Q_n\longrightarrow\infty,\qquad (a_n,Q_n)=1,\qquad
\left|c-\frac{a_n}{Q_n}\right|<\frac1{Q_n^2}.
\]
The variable $X$ runs through $X_n=Q_n^{2/(1+\kappa)}$. Thus $X_n\to\infty$ and
$Q_n=X_n^{(1+\kappa)/2}$. Removing finitely many initial convergents will not
change any of the limiting arguments. When working at one scale we write $a,Q,X$
in place of $a_n,Q_n,X_n$. Set
\begin{equation}\label{eq:etaG}
\eta=20X^{-\kappa},\qquad
N=\left\lceil\frac{4X}{A}\right\rceil,\qquad
G_X=\frac{X\eta^2}{(\log X)^2}.
\end{equation}
We always take $X$ large enough that $\eta<1/2$. For $x\in[A,B]$, let $F_X(x)$ count the prime denominators $p$ for which there exist $r\in\PP$ and $s\in\ZZ$ satisfying
\begin{equation}\label{eq:FX}
\frac{X}{2B}\le p\le\frac{4X}{A},\qquad
0<px-r<\eta,\qquad0<pcx-s<\eta.
\end{equation}
Since $\eta<1$, each $p$ admits at most one possible pair $(r,s)$, so $F_X$ also counts the corresponding triples. It is a finite sum of indicators of countable unions of open sets, and $0\le F_X\le N$; in particular it is measurable and integrable on every bounded interval. For $x\in[A,B]$, the restriction on $p$ forces both prime variables to tend to infinity with $X$.

\section{Local lower bounds by fixing the prime numerator}

In this section we derive a local integral lower bound from
Lemma~\ref{lem:matomaki}. We fix the prime numerator first and use the
prime number theorem to count admissible prime denominators.
Summing the lengths of the corresponding approximation intervals
yields Proposition~\ref{prop:lower}, with a positive constant
independent of the chosen subinterval.

\begin{proposition}\label{prop:lower}
Assume the prime-counting input \eqref{eq:primeinput} along the scales
\eqref{eq:scale}. There exists $c_0>0$, depending only on $A,B,c,\kappa$,
such that, for each fixed interval $I=[\alpha,\beta]\subset[A,B]$ with
$\alpha<\beta$,
\begin{equation}\label{eq:lower}
\int_I F_X(x)\,dx\ge c_0|I|G_X
\end{equation}
for all sufficiently large $X$ in \eqref{eq:scale}. The threshold for $X$
may depend on $I$; the constant $c_0$ does not.
\end{proposition}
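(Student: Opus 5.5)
The plan is to build, for each good prime numerator $r$ supplied by Lemma~\ref{lem:matomaki} and each prime denominator $p$ with $r/p$ in $I$, an explicit subinterval of $I$ of length $\asymp\eta/X$ on which $(p,r)$ is counted by $F_X$. I then sum the lengths of these subintervals. The prime numerator is fixed first, and the prime number theorem counts the admissible prime denominators.

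\emph{Step 1: the local condition for a fixed pair $(p,r)$.} Fix a prime $r\in(X,2X]$ with $\dist{cr}\le 5X^{-\kappa}=\eta/4$. Write $cr=s_0+\delta$, where $s_0\in\ZZ$ and $|\delta|\le\eta/4$. For $x>0$ put $t=px-r$, so that $pcx=cr+ct=s_0+\delta+ct$. Suppose $ct\in(\eta/4,3\eta/4)$. Then $\delta+ct\in(0,\eta)$, so $s=s_0$ gives $0<pcx-s<\eta$. Since $c>1$, we also have $0<t<3\eta/(4c)<\eta$. Hence \eqref{eq:FX} holds, apart from the range condition on $p$, whenever
\[
x\in J_{p,r}:=\Bigl(\frac{r}{p}+\frac{\eta}{4cp},\ \frac{r}{p}+\frac{3\eta}{4cp}\Bigr),
\]
an interval of length $\eta/(2cp)\ge A\eta/(8cX)$ once $p\le 4X/A$.

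\emph{Step 2: choosing the denominators.} Take primes $p$ with
\[
\frac{r}{\beta-\theta}<p<\frac{r}{\alpha},
\qquad \theta=\frac{\beta-\alpha}{2}.
\]
Then $r/p\in(\alpha,\beta-\theta)$. Since $\eta/(cp)\to0$, this gives $J_{p,r}\subset I$ for $X$ large in terms of $I$. Moreover $p\ge r/B>X/(2B)$ and $p<2X/A\le 4X/A$, so the range condition in \eqref{eq:FX} holds. The interval for $p$ has length
\[
r\Bigl(\frac1\alpha-\frac1{\beta-\theta}\Bigr)\ge \frac{X(\beta-\alpha)}{2B^2}=\frac{X|I|}{2B^2}.
\]
By the prime number theorem it contains at least $X|I|/(4B^2\log X)$ primes once $X$ is large in terms of $I$. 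This is the only place the threshold depends on $I$; the constant $1/(4B^2)$ does not.

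\emph{Step 3: disjointness and summation.} The main point to check is that summing $\ind_{J_{p,r}}$ does not overcount $F_X$. Since $\eta<1$, for each $p$ at most one $r$ satisfies $0<px-r<\eta$, so $F_X(x)\ge\sum_{p}\sum_{r}\ind_{J_{p,r}}(x)$. Equivalently, for fixed $p$ the intervals $J_{p,r}$ lie in $(r/p,(r+1)/p)$ and are pairwise disjoint. Integrating and using \eqref{eq:primeinput} for the number of admissible $r$ gives
\[
\int_I F_X\ge \#\{r\}\cdot\frac{X|I|}{4B^2\log X}\cdot\frac{A\eta}{8cX}
\gg_{c,\kappa}\frac{X^{1-\kappa}}{\log X}\cdot\frac{|I|A\eta}{32cB^2\log X}.
\]
Since $X^{-\kappa}=\eta/20$, the right side equals $c_0|I|\,X\eta^2/(\log X)^2=c_0|I|G_X$. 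Here $c_0$ depends only on $A,B,c,\kappa$ through the implied constant in \eqref{eq:primeinput}. I expect no serious obstacle beyond the uniform PNT count in Step 2, which is needed only for fixed $I$ and hence is unproblematic.
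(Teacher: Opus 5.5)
Your proposal is correct and takes essentially the same approach as the paper. You fix a Matom\"aki numerator $r$ with $|cr-s_0|\le\eta/4$, count prime denominators by the prime number theorem in a window $\asymp r$ strictly inside $(r/\beta,r/\alpha)$, attach to each pair the interval where $c(px-r)$ lies in a fixed subinterval of $(\eta/4,3\eta/4)$, and use $\eta<1$ to dominate the indicators by $F_X$. The differences are only in bookkeeping: your window $(r/(\beta-\theta),r/\alpha)$, your uniform bound $|J_{p,r}|\ge A\eta/(8cX)$ in place of the paper's reciprocal sum $\sum 1/p$ over $(a_Ir,b_Ir]$, and your $t$-interval of length $\eta/(2c)$ rather than $\eta/(4c)$; these give $c_0=MA/(640cB^2)$ instead of $M/(640cB)$.
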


\begin{proof}
Choose $M>0$, independent of $I$, so that the prime-counting input gives,
for all sufficiently large admissible $X$, a finite set
$\mathcal R_X\subset\PP\cap(X,2X]$ satisfying
\begin{equation}\label{eq:good-numerator-count}
\#\mathcal R_X\ge M\frac{X^{1-\kappa}}{\log X}.
\end{equation}
For each $r\in\mathcal R_X$, choose $s_r\in\ZZ$ with
\[
e_r=cr-s_r,\qquad |e_r|\le 5X^{-\kappa}=\eta/4.
\]

We first obtain a lower bound for the prime denominators using an interval
strictly inside the permitted range. Put
\begin{equation}\label{eq:prime-interior-endpoints}
a_I=\frac{3/\beta+1/\alpha}{4},\qquad
b_I=\frac{1/\beta+3/\alpha}{4}.
\end{equation}
Then
\[
0<\frac1\beta<a_I<b_I<\frac1\alpha.
\]
For $X<r\le2X$, define the finite set
\[
\mathcal P_r=\{p\in\PP:a_Ir<p\le b_Ir\}.
\]
If $\eta\le1$ and $X>1/(\beta a_I-1)$, then
$r(\beta a_I-1)>1\ge\eta$. Consequently every $p\in\mathcal P_r$
satisfies the strict bounds
\begin{equation}\label{eq:pinterval}
\frac{r+\eta}{\beta}<a_Ir<p\le b_Ir<\frac r\alpha.
\end{equation}
These bounds hold simultaneously for all $X<r\le2X$ once $X$ is
sufficiently large for the fixed interval $I$.

Write $\pi(y)=\#\{p\in\PP:p\le y\}$. For the fixed positive constants
$a_I<b_I$, the prime number theorem gives
\[
\frac{\log r}{r}\bigl(\pi(b_Ir)-\pi(a_Ir)\bigr)
\longrightarrow b_I-a_I\qquad(r\longrightarrow\infty).
\]
Thus, for all sufficiently large $r$,
\[
\#\mathcal P_r\ge\frac{b_I-a_I}{2}\frac r{\log r}.
\]
Since $p\le b_Ir$ for every $p\in\mathcal P_r$, it follows directly that
\[
\sum_{p\in\mathcal P_r}\frac1p
\ge\frac{\#\mathcal P_r}{b_Ir}
\ge\frac{b_I-a_I}{2b_I\log r}.
\]
For $X\ge2$ and $X<r\le2X$, we have
$\log r\le\log(2X)\le2\log X$. Moreover,
\[
\frac{b_I-a_I}{4b_I}
=\frac{\beta-\alpha}{2(\alpha+3\beta)}
\ge\frac{\beta-\alpha}{8B}.
\]
We therefore obtain, uniformly for $X<r\le2X$,
\begin{equation}\label{eq:reciprocalp}
\sum_{p\in\mathcal P_r}\frac1p
\ge\frac{b_I-a_I}{4b_I\log X}
\ge\frac{\beta-\alpha}{8B\log X}
\end{equation}
for all sufficiently large $X$, with a threshold that may depend on $I$.

We next turn each pair $r\in\mathcal R_X$, $p\in\mathcal P_r$ into a
set of good parameters. Define
\[
t_0=\frac{\eta}{2c},\qquad t_1=\frac{3\eta}{4c}.
\]
Since $c>1$, we have $0<t_0<t_1<\eta$. If $t_0<t<t_1$, then
\[
0<t<\eta,\qquad
0<\frac\eta4<ct+e_r<\eta.
\]
It follows that every $x$ in the open interval
\begin{equation}\label{eq:goodx}
J_{r,p}=\left(\frac{r+t_0}{p},\frac{r+t_1}{p}\right)
\end{equation}
satisfies
\[
0<px-r<\eta,\qquad 0<pcx-s_r<\eta.
\]
Indeed, take $t=px-r$ and use
$pcx-s_r=c(px-r)+e_r$. The denominator bounds \eqref{eq:pinterval}
place $J_{r,p}$ inside $(\alpha,\beta)$: its left endpoint is greater
than $r/p>\alpha$, and its right endpoint is less than
$(r+\eta)/p<\beta$. They also imply
\[
\frac{X}{2B}\le p\le\frac{4X}{A},
\]
since $p>(r+\eta)/\beta>X/B$ and $p<r/\alpha\le2X/A$.
Thus these are genuine solutions counted by $F_X$, and
\[
|J_{r,p}|=\frac{t_1-t_0}{p}=\frac{\eta}{4cp}.
\]

For completeness, the summation of these contributions is pointwise valid.
Suppose that $x$ belongs to two selected intervals with the same denominator
$p$. Their integer numerators $r,r'$ both satisfy $0<px-r,px-r'<\eta$,
so $|r-r'|<\eta\le1$ and hence $r=r'$. The corresponding integer
$s$ is likewise unique under the second approximation inequality.
Consequently distinct active pairs $(r,p)$ have distinct denominators
counted by $F_X$, and
\begin{equation}\label{eq:lower-indicator-domination}
\sum_{r\in\mathcal R_X}\sum_{p\in\mathcal P_r}\ind_{J_{r,p}}(x)
\le F_X(x).
\end{equation}
All these functions are measurable and integrable on $I$: the two sums are
finite, and $F_X$ is a measurable function bounded between $0$ and $N$.
Integrating \eqref{eq:lower-indicator-domination} term by term, and then
using \eqref{eq:good-numerator-count} and \eqref{eq:reciprocalp}, gives
\begin{align*}
\int_I F_X(x)\,dx
&\ge\frac{\eta}{4c}
\sum_{r\in\mathcal R_X}\sum_{p\in\mathcal P_r}\frac1p\\
&\ge\frac{\eta}{4c}\,
M\frac{X^{1-\kappa}}{\log X}\,
\frac{\beta-\alpha}{8B\log X}\\
&=\frac{M}{640cB}(\beta-\alpha)\,
\frac{X\eta^2}{(\log X)^2}.
\end{align*}
The last equality uses $X^{1-\kappa}=X\eta/20$. Hence we may take
$c_0=M/(640cB)>0$. This constant is independent of $I$; only the
thresholds used to fit the inner prime window and apply the prime number
theorem depend on $\alpha,\beta$.
\end{proof}

\section{Fourier estimates for rectangles}

This section develops the Fourier estimates needed for the sieve upper
bound. Lemma~\ref{lem:concentration-kernel} constructs a normalized
nonnegative trigonometric kernel and bounds its Fourier coefficients
and tail integrals. Lemma~\ref{lem:interval} uses this kernel to construct
pointwise majorants and minorants for interval indicators, with explicit
control of their constant coefficients.
Lemma~\ref{lem:kernel-parameters} selects a common kernel and verifies
the required width and degree restrictions uniformly over the sieve
moduli. Lemma~\ref{lem:geometric} bounds the integral of the absolute
value of a geometric exponential sum, while
Lemma~\ref{lem:frequencies} uses a rational approximation to $c$ to
separate the nonzero Fourier frequencies from zero and bound the sum
of their reciprocals. Finally, Lemma~\ref{lem:rectangle} combines the
polynomial and frequency estimates to bound the integrated error in
counting simultaneous fractional-part conditions with divisibility
restrictions, keeping the Fourier degree separate from the error
in the constant coefficients.

\begin{lemma}\label{lem:concentration-kernel}
Let $L,m\ge1$ be integers, and put
\[
S_L(x)=\sum_{j=0}^{L-1}e(jx),\qquad
Z_{L,m}=\int_0^1|S_L(x)|^{2m}\,dx,\qquad
K_{L,m}(x)=\frac{|S_L(x)|^{2m}}{Z_{L,m}}.
\]
Then $K_{L,m}$ is a nonnegative, even, real trigonometric polynomial,
of period one and degree at most
\[
D=m(L-1).
\]
Its integral over one period is one, and
\begin{equation}\label{eq:kernel-mass}
Z_{L,m}\ge\frac{1}{4\pi L}\left(\frac L2\right)^{2m}>0.
\end{equation}
For $0<\rho\le1/2$,
\begin{equation}\label{eq:kernel-tail}
\int_\rho^{1-\rho}K_{L,m}(x)\,dx
\le C_m(L\rho)^{-(2m-1)},\qquad C_m=8\pi\,2^{2m}.
\end{equation}
With the convention
$\widehat K(h)=\int_0^1K(x)e(-hx)\,dx$, its coefficients satisfy
\begin{equation}\label{eq:kernel-coefficients}
\widehat K_{L,m}(0)=1,\qquad
|\widehat K_{L,m}(h)|\le1\quad(h\in\ZZ),\qquad
\widehat K_{L,m}(h)=0\quad(|h|>D).
\end{equation}
\end{lemma}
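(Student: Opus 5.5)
The plan is to expand $|S_L|^{2m}=S_L^m\overline{S_L^m}$ algebraically for the structural claims, and then to use the standard elementary bounds on the Dirichlet-type kernel $|S_L(x)|=|\sin(\pi Lx)/\sin(\pi x)|$ for the two analytic estimates.

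\emph{Structure and coefficients.} Write $S_L(x)^m=\sum_{k=0}^{m(L-1)}a_k e(kx)$, where $a_k\ge0$ counts the representations of $k$ as a sum of $m$ integers in $[0,L-1]$. Then
\[
|S_L(x)|^{2m}=\sum_{h}\Bigl(\sum_k a_{k+h}a_k\Bigr)e(hx).
\]
The coefficients are real, nonnegative and symmetric in $h$, and they vanish for $|h|>D=m(L-1)$. This gives evenness, reality, period one and the degree bound. By Parseval, $Z_{L,m}=\sum_k a_k^2>0$, and this equals the $h=0$ coefficient, so $\widehat K(0)=1$ and $\int_0^1K=1$. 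The bound $|\widehat K(h)|\le\int_0^1|K|=\widehat K(0)=1$ follows because $K\ge0$. Alternatively, Cauchy--Schwarz on $\sum_k a_{k+h}a_k$ gives the same bound.

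\emph{Mass lower bound.} For $|x|\le 1/(2L)$ we have $|S_L(x)|\ge |\sin \pi Lx|/(\pi|x|)$. Using $\sin(\pi Lx)\ge 2Lx$ for $0\le Lx\le1/2$ gives $|S_L(x)|\ge 2L/\pi$. That is more than needed, but restricting to a shorter range makes it cleaner. On $|x|\le 1/(4L)$ the real part of each term $e(jx)$ is $\cos(2\pi jx)\ge\cos(\pi/2\cdot (L-1)/L)\ge0$, which is too weak near the endpoint. Instead I would use the sine estimate on $|x|\le1/(2L)$, where $|S_L|\ge 2L/\pi\ge L/2$. This interval has length $1/L\ge 1/(4\pi L)$, so
\[
Z_{L,m}\ge \frac1L\left(\frac L2\right)^{2m}.
\]
That is stronger than \eqref{eq:kernel-mass}.

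\emph{Tail bound.} For $\rho\le x\le 1/2$, use $|S_L(x)|\le 1/|\sin\pi x|\le 1/(2x)$. By symmetry,
\[
\int_\rho^{1-\rho}|S_L|^{2m}\le 2\int_\rho^{1/2}(2x)^{-2m}\,dx\le \frac{2\cdot 2^{-2m}\rho^{1-2m}}{2m-1}.
\]
Dividing by the lower bound from the mass estimate, $Z_{L,m}\ge (4\pi L)^{-1}(L/2)^{2m}$, gives at most $8\pi L\,\rho^{1-2m}L^{-2m}=8\pi(L\rho)^{-(2m-1)}$, which is within $C_m$.

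The only delicate step is the elementary inequality $|\sin \pi Lx|\ge 2L|x|$ on $|x|\le 1/(2L)$, which is concavity of $\sin$ on $[0,\pi/2]$, together with $|\sin\pi x|\le\pi|x|$. Everything else is bookkeeping. The generous constant $C_m=8\pi\,2^{2m}$ absorbs any slack, including the case $m=1$ where $2m-1=1$.
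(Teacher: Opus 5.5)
Your proof is correct and follows essentially the same route as the paper. You expand $S_L^m\overline{S_L^m}$ for the structural and coefficient claims, bound $|S_L|$ below near $0$ to control $Z_{L,m}$, and use $|S_L(x)|\le 1/(2x)$ on $[\rho,1/2]$ for the tail. The differences are minor: you get the main-lobe bound from the sine ratio and Jordan's inequality on $|x|\le 1/(2L)$ (implicitly shifting by periodicity into $[0,1]$) rather than from $|S_L(x)-L|\le 2\pi L^2x$, and you keep the factor $1/(2x)$, so your constants ($Z_{L,m}\ge L^{-1}(L/2)^{2m}$ and tail constant $8\pi/(2m-1)$) are slightly better than required.
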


\begin{proof}
For $x\ge0$, the inequality $|e(t)-1|\le2\pi|t|$ gives
\[
|S_L(x)-L|
\le\sum_{j=0}^{L-1}|e(jx)-1|
\le2\pi L^2x.
\]
Consequently $|S_L(x)|\ge L/2$ for
$0\le x\le(4\pi L)^{-1}$. Integrating over this interval, which is
contained in $[0,1]$, proves \eqref{eq:kernel-mass}.
Nonnegativity and normalization follow immediately. Moreover,
\[
|S_L(x)|^{2m}
=\left(\sum_{j=0}^{L-1}e(jx)\right)^m
 \left(\sum_{j=0}^{L-1}e(-jx)\right)^m.
\]
Every frequency in this finite expansion is the difference of two sums
of $m$ integers in $[0,L-1]$, and hence lies in $[-D,D]$.
This also proves the asserted trigonometric-polynomial representation.
The equality $S_L(-x)=\overline{S_L(x)}$ proves evenness.

For $0<x\le1/2$, the geometric-sum identity and
$\sin(\pi x)\ge2x$ give
\[
|S_L(x)|\le\frac{1}{2x}\le\frac1x.
\]
Since $|S_L(1-x)|=|S_L(x)|$, we obtain, writing $k=2m-1\ge1$,
\[
\begin{split}
\int_\rho^{1-\rho}|S_L(x)|^{2m}\,dx
&\le2\int_\rho^{1/2}x^{-2m}\,dx\\
&=\frac2k\bigl(\rho^{-k}-(1/2)^{-k}\bigr)
\le2\rho^{-k}.
\end{split}
\]
Division by the lower bound in \eqref{eq:kernel-mass} yields
\eqref{eq:kernel-tail}, with the displayed constant $C_m$.
Finally, orthogonality extracts the coefficients of the finite expansion,
while positivity and unit mass give
$|\widehat K_{L,m}(h)|\le\int_0^1K_{L,m}=1$.
\end{proof}

\begin{lemma}\label{lem:interval}
Let $L,m\ge1$, $0<\rho\le1/2$, and $0<\ell\le1$ satisfy
$2\rho\le\ell$ and
\begin{equation}\label{eq:interval-tail-condition}
\int_\rho^{1-\rho}K_{L,m}(y)\,dy\le\rho.
\end{equation}
Define the periodic open-interval indicator
\[
\chi_\ell(x)=\ind_{(0,\ell)}(\{x\}),
\]
and, using ordinary integrals over the indicated real intervals, set
\begin{equation}\label{eq:interval-polynomials}
\begin{split}
U_\ell(x)&=\int_{-\rho}^{\ell+\rho}K_{L,m}(x-y)\,dy+\rho,\\
V_\ell(x)&=\int_\rho^{\ell-\rho}K_{L,m}(x-y)\,dy-\rho.
\end{split}
\end{equation}
These are real trigonometric polynomials of degree at most $D=m(L-1)$.
For every real $x$, including all endpoints modulo one,
\begin{equation}\label{eq:interval-pointwise}
V_\ell(x)\le\chi_\ell(x)\le U_\ell(x),\qquad U_\ell(x)\ge0.
\end{equation}
Writing $t=3\rho$, their constant coefficients are
\begin{equation}\label{eq:interval-masses}
\widehat U_\ell(0)=\ell+t,\qquad
\widehat V_\ell(0)=\ell-t,
\end{equation}
and, for every integer $h$,
\begin{equation}\label{eq:interval-coefficients}
|\widehat U_\ell(h)|\le\ell+t,\qquad
|\widehat V_\ell(h)|\le\ell+t.
\end{equation}
No nonnegativity assertion is made for $V_\ell$.
\end{lemma}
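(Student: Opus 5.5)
The plan is to work directly with the convolution representation. Write $K=K_{L,m}$, which is nonnegative, one-periodic, and of unit mass by Lemma~\ref{lem:concentration-kernel}. The substitution $y\mapsto x-y$ gives
\[
\int_a^bK(x-y)\,dy=\int_{x-b}^{x-a}K(u)\,du .
\]
This is the $K$-mass of an arc of length $b-a$ placed at $x$. Expanding $K(x-y)=\sum_{|h|\le D}\widehat K(h)e(h(x-y))$ and integrating in $y$ term by term, both $U_\ell$ and $V_\ell$ become trigonometric polynomials of degree at most $D$. They are real because $K$ is real. The constant coefficient of $U_\ell$ is $\widehat K(0)(\ell+2\rho)+\rho=\ell+3\rho$. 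For $V_\ell$ it is $(\ell-2\rho)-\rho=\ell-3\rho$, and the length $\ell-2\rho$ is nonnegative because $2\rho\le\ell$. This gives \eqref{eq:interval-masses}.

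For $h\ne0$ the coefficient is $\widehat K(h)\int_a^b e(-hy)\,dy$. Its modulus is at most $|\widehat K(h)|(b-a)\le b-a$ by \eqref{eq:kernel-coefficients}. Hence $|\widehat U_\ell(h)|\le\ell+2\rho\le\ell+t$ and $|\widehat V_\ell(h)|\le\ell-2\rho\le\ell+t$. At $h=0$ the bounds in \eqref{eq:interval-coefficients} follow from \eqref{eq:interval-masses}, since $|\ell-t|\le\ell+t$. Nonnegativity of $U_\ell$ is immediate, because it is the integral of a nonnegative function plus $\rho>0$.

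The pointwise inequalities \eqref{eq:interval-pointwise} are the main step. By periodicity we may reduce $x$ to $[0,1)$.

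For the upper bound, if $\chi_\ell(x)=0$ there is nothing to prove, since $U_\ell\ge0$. If $x\in(0,\ell)$, the arc $[x-\ell-\rho,\,x+\rho]$ contains $[-\rho,\rho]$. The complementary set modulo one is $(\rho,1-\rho)$, and by \eqref{eq:interval-tail-condition} it carries mass at most $\rho$. Here the condition $\ell+2\rho\le1$ would be needed for an arc of length less than one. Otherwise the arc has length at least one, and its mass is at least $\int_0^1K=1$, so the bound holds trivially. In the case $\ell+2\rho<1$, the $K$-mass of the arc is at least $1-\rho$, and therefore $U_\ell(x)\ge1$.

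For the lower bound, first take $x\in(0,\ell)$. Then $V_\ell(x)\le1-\rho<1=\chi_\ell(x)$, since the arc has mass at most one. Now take $x\notin(0,\ell)$ modulo one, which includes the endpoints $0$ and $\ell$. The arc $[x-\ell+\rho,\,x-\rho]$ modulo one then avoids $(-\rho,\rho)$. To see this, suppose $u=x-y$ with $y\in[\rho,\ell-\rho]$ and $\|u\|<\rho$. Then $x$ would lie within distance less than $\rho$ of $[\rho,\ell-\rho]$ modulo one, and so inside $(0,\ell)$. The arc therefore lies in $[\rho,1-\rho]$ modulo one. It has length at most $\ell-2\rho<1$, so it wraps at most once, and its mass is at most $\rho$. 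Consequently $V_\ell(x)\le\rho-\rho=0=\chi_\ell(x)$.

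I expect the main obstacle to be careful bookkeeping of arcs modulo one. This covers endpoint inclusion, the wrap-around case $\ell+2\rho\ge1$ for $U_\ell$, and checking that the $V_\ell$ arc, of length less than one, covers each point of the circle at most once. Each of these reduces either to the unit mass of $K$ or to the tail bound \eqref{eq:interval-tail-condition}.
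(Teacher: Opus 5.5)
Your proposal is correct and follows essentially the same route as the paper. Both expand the convolutions $\int_a^b K(x-y)\,dy$ term by term to get the degree and coefficient bounds, and both prove the pointwise inequalities by checking that the relevant arc either contains $[-\rho,\rho]$ or lies in $[\rho,1-\rho]$ modulo one. The differences are cosmetic. Your split on whether $\ell+2\rho<1$ is unnecessary: containing $[-\rho,\rho]$ together with $K\ge0$ already gives mass at least $\int_{-\rho}^{\rho}K\ge1-\rho$ for an arc of any length. You also treat $x=0$ together with $[\ell,1)$ through your distance argument, whereas the paper handles $w=0$ separately by shifting the interval by one.
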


\begin{proof}
Write $K=K_{L,m}$. For $a\le b$, put
\[
\mathcal C_{a,b}(x)=\int_a^bK(x-y)\,dy
=\int_{x-b}^{x-a}K(y)\,dy.
\]
It is nonnegative and one-periodic. If $b-a\le1$, its integral is over a
subinterval of a period, so $\mathcal C_{a,b}(x)\le1$.
Also, periodicity and unit mass give
\begin{equation}\label{eq:near-tail-mass}
\int_{-\rho}^{\rho}K(y)\,dy+
\int_\rho^{1-\rho}K(y)\,dy=1.
\end{equation}
We can reduce each pointwise assertion to $w=\{x\}\in[0,1)$.

If $0<w<\ell$, the interval
$[w-\ell-\rho,w+\rho]$ contains $[-\rho,\rho]$.
Equations \eqref{eq:interval-tail-condition} and
\eqref{eq:near-tail-mass} therefore imply
$U_\ell(w)\ge1$. Outside $(0,\ell)$ the upper bound follows from
$U_\ell\ge0$.

For the lower bound, if $0<w<\ell$, then
$\mathcal C_{\rho,\ell-\rho}(w)\le1$, since
$0\le\ell-2\rho\le1$, and consequently $V_\ell(w)\le1$.
If $w\in[\ell,1)$, the integration interval
\[
[w-\ell+\rho,w-\rho]\subset[\rho,1-\rho]
\]
has nonnegative length. Its $K$-integral is at most $\rho$, so
$V_\ell(w)\le0$.
The remaining possibility is $w=0$. In this case periodicity shifts the
integration interval to
\[
[-\ell+\rho,-\rho]+1
=[1-\ell+\rho,1-\rho]\subset[\rho,1-\rho],
\]
and gives the same bound. This proves \eqref{eq:interval-pointwise}
at both endpoints as well as at every other point. In particular, the
integrals in \eqref{eq:interval-polynomials} handle passage across zero
without an endpoint convention for a periodic characteristic function.

To verify the degree and the coefficient assertions, integrate the finite
Fourier expansion of $K$ term by term. For every integer $h$,
\[
\widehat{\mathcal C_{a,b}}(h)
=\widehat K(h)\int_a^b e(-hy)\,dy.
\]
Thus its degree is at most $D$, its constant coefficient is $b-a$, and
\[
|\widehat{\mathcal C_{a,b}}(h)|\le b-a.
\]
Adding or subtracting a real constant changes only the zero coefficient.
Formula \eqref{eq:interval-masses} follows, and the triangle inequality
gives
\[
|\widehat U_\ell(h)|\le\ell+2\rho+\rho=\ell+3\rho,
\qquad
|\widehat V_\ell(h)|\le\ell-2\rho+\rho
=\ell-\rho\le\ell+3\rho.
\]
This argument remains valid when the minorant, or its constant
coefficient, is negative.
\end{proof}

\begin{lemma}\label{lem:kernel-parameters}
Fix $0<\kappa<1/3$, and define
\begin{equation}\label{eq:kernel-parameters}
\delta=\frac{1-3\kappa}{200},\qquad z=X^\delta,\qquad
\eta=20X^{-\kappa},\qquad
\rho=\eta z^{-14},\qquad
L=\left\lceil\frac{z^{20}}\eta\right\rceil.
\end{equation}
Choose an integer $m\ge1$, depending only on $\kappa$, such that
\begin{equation}\label{eq:kernel-order}
6(2m-1)\delta>\kappa+14\delta,
\end{equation}
and put $D=m(L-1)$ and $t=3\rho$.
Then, for all sufficiently large $X$, the same kernel $K_{L,m}$ satisfies
Lemma~\ref{lem:interval} simultaneously for
$\ell=\eta/v$ and $\ell=\eta$, for every integer $1\le v\le z^2$.
Moreover,
\begin{equation}\label{eq:kernel-width-separation}
t\le\eta/v,\qquad
1\le D\ll_\kappa z^{20}/\eta,\qquad
vD<Q/2,\qquad D<Q,
\end{equation}
where $Q=X^{(1+\kappa)/2}$.
\end{lemma}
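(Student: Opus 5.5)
The plan is to reduce everything to comparisons of powers of $X$, using that $z=X^\delta$, $\eta=20X^{-\kappa}$ and $Q=X^{(1+\kappa)/2}$. First, $m$ exists because the left side of \eqref{eq:kernel-order} tends to infinity with $m$ while $\delta>0$ is fixed. We then check the hypotheses of Lemma~\ref{lem:interval} one at a time for $\ell\in\{\eta/v:1\le v\le z^2\}\cup\{\eta\}$. The only nontrivial hypothesis is the tail condition \eqref{eq:interval-tail-condition}. The remaining conditions are the following.

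\begin{itemize}
\item $0<\ell\le1$ holds since $\eta<1/2$.
\item $\rho\le1/2$ holds since $\rho=\eta z^{-14}$.
\item $2\rho\le\ell$ reduces, in the worst case $v=z^2$, to $2\eta z^{-14}\le\eta z^{-2}$. This is $2\le z^{12}$, true for large $X$.
\end{itemize}

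The same computation gives $t=3\rho=3\eta z^{-14}\le\eta z^{-2}\le\eta/v$, which is the first claim in \eqref{eq:kernel-width-separation}.

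For the tail condition we invoke \eqref{eq:kernel-tail}. The kernel does not depend on $\ell$, so one verification suffices for all $\ell$. Since $L\ge z^{20}/\eta$, we have $L\rho\ge z^{6}$, and therefore
\[
\int_\rho^{1-\rho}K_{L,m}\le C_m z^{-6(2m-1)}=C_mX^{-6(2m-1)\delta}.
\]
We need this to be at most $\rho=20X^{-\kappa-14\delta}$. By \eqref{eq:kernel-order} the exponent $6(2m-1)\delta$ strictly exceeds $\kappa+14\delta$. Since $C_m$ depends only on $\kappa$, the inequality holds for all sufficiently large $X$. This is the step that dictated the choice of $m$, and it is the only place where care is needed. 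The point is that $L$ was chosen so that $L\rho$ is a positive power $z^6$, not merely $\gg1$.

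For the degree bounds, note $L\ge2$ for large $X$, so $D=m(L-1)\ge1$. Also $L\le z^{20}/\eta+1\le 2z^{20}/\eta$, giving $D\le mL\ll_\kappa z^{20}/\eta$.

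Finally, for $v\le z^2$ we have
\[
vD\ll_\kappa z^{22}/\eta\asymp X^{22\delta+\kappa}.
\]
We compare this with $Q/2\asymp X^{(1+\kappa)/2}$, so we need $22\delta+\kappa<(1+\kappa)/2$, that is, $44\delta<1-\kappa$. Since $\delta=(1-3\kappa)/200$, we get $44\delta<1-3\kappa\le1-\kappa$, so the exponent gap is a fixed positive number depending on $\kappa$. Hence $vD<Q/2$ for large $X$, and $D\le vD<Q$ follows immediately by taking $v=1$. All thresholds depend only on $\kappa$ (and the fixed data), so the conclusions hold uniformly in $v$.
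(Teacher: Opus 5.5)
Your proposal is correct and follows essentially the same route as the paper. Both arguments get the tail condition from $L\rho\ge z^6$ together with the choice of $m$ in \eqref{eq:kernel-order}, the width conditions from a fixed power of $z$ being large (the paper uses $4v\le z^{14}$ where you use $z^{12}\ge3$), and $vD<Q/2$ from the exponent comparison $22\delta-(1-\kappa)/2<0$, uniformly in $v\le z^2$.
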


\begin{proof}
Because $\delta>0$, one can fix $m$ satisfying
\eqref{eq:kernel-order}; equivalently, it is enough to choose
$m>(\kappa+20\delta)/(12\delta)$.
The choice is independent of $X$, $u$ and $v$.
The radius $\rho=20X^{-\kappa-14\delta}$ tends to zero, and
\[
L\rho\ge\frac{z^{20}}\eta\,\eta z^{-14}=z^6.
\]
Writing $k=2m-1$, the ratio of the upper bound in
\eqref{eq:kernel-tail} to $\rho$ is therefore at most
\[
\frac{C_m z^{-6k}}{\rho}
=\frac{C_m}{20}\,
X^{\kappa+(14-6k)\delta}\longrightarrow0
\]
by \eqref{eq:kernel-order}. Hence
\eqref{eq:interval-tail-condition} holds for large $X$.

For $z\ge2$ and $1\le v\le z^2$ we have $4v\le z^{14}$, whence
\[
4\rho\le\frac\eta v\le\eta.
\]
In particular $2\rho<\eta/v$, $t\le\eta/v$, and, once $\eta<1/2$,
\[
\frac\eta v+2\rho\le\frac32\frac\eta v<1.
\]
Taking $v=1$ gives the corresponding assertions for $\ell=\eta$.
Thus both intervals satisfy every width and endpoint hypothesis in
Lemma~\ref{lem:interval}.

Finally, $z^{20}/\eta\to\infty$, so $L\ge2$ and
$L\le2z^{20}/\eta$ eventually. Since $m$ is fixed,
\[
\frac{z^2D}{Q}\le\frac{mz^2L}{Q}
\ll_\kappa X^{22\delta-(1-\kappa)/2}
=X^{(-78+34\kappa)/200}\longrightarrow0.
\]
This proves the degree bound and $vD<Q/2$ uniformly for $v\le z^2$.
It also gives $D<Q$ for all sufficiently large $X$.
\end{proof}

\begin{lemma}\label{lem:geometric}
Let $A\le B$, let $M\ge1$ be an integer, and let $\xi\in\RR\setminus\{0\}$.
Put
\[
K_{A,B}=(B-A+1)\left(1+\frac1{\log2}\right).
\]
Then
\begin{equation}\label{eq:geometric}
\int_A^B\left|\sum_{j=1}^{M}e(j\xi x)\right|\,dx
\le K_{A,B}\left(1+\frac1{|\xi|}\right)\log(2M).
\end{equation}
\end{lemma}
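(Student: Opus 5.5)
We bound the geometric sum pointwise and then integrate. Write $y=\xi x$. The closed form of the geometric series gives
\[
\Bigl|\sum_{j=1}^M e(jy)\Bigr|\le\min\Bigl(M,\frac{1}{2\dist{y}}\Bigr),
\]
using $|\sin(\pi y)|\ge 2\dist{y}$. Substituting $y=\xi x$ and $dx=dy/|\xi|$ turns the integral into
\[
\frac{1}{|\xi|}\int_{I}\min\Bigl(M,\frac1{2\dist{y}}\Bigr)\,dy,
\]
where $I$ is an interval of length $|\xi|(B-A)$. The integrand is one-periodic, so we cover $I$ by at most $|\xi|(B-A)+1$ consecutive unit intervals and bound the integral by this number times the integral over a single period.

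The integral over one period is computed directly. The contribution from $\dist{y}\le 1/(2M)$ is at most $M\cdot(1/M)=1$. The remaining contribution is
\[
2\int_{1/(2M)}^{1/2}\frac{dy}{2y}=\log M.
\]
Hence the integral over one period is at most $1+\log M$.

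To reach the stated shape, we compare $1+\log M$ with $\log(2M)$. Since $\log(2M)\ge\log 2$ for $M\ge1$, we have
\[
1+\log M\le \Bigl(1+\frac1{\log 2}\Bigr)\log(2M).
\]
This is where the factor $1+1/\log2$ in $K_{A,B}$ comes from.

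Combining the three steps, the integral is at most
\[
\frac{|\xi|(B-A)+1}{|\xi|}\Bigl(1+\frac1{\log2}\Bigr)\log(2M)
=\Bigl((B-A)+\frac1{|\xi|}\Bigr)\Bigl(1+\frac1{\log2}\Bigr)\log(2M).
\]
Finally, $(B-A)+1/|\xi|\le (B-A+1)(1+1/|\xi|)$, and this yields \eqref{eq:geometric}.

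The only real obstacle is the bookkeeping of how many periods the interval $I$ meets. An interval of length $\lambda$ is covered by at most $\lfloor\lambda\rfloor+2$ unit intervals, but it can be covered by $\lambda+1$ unit-length pieces whose union has total length $\lambda+1$, using periodicity. That gives the count $\lambda+1$, so no factor $2$ is lost. Even if a factor were lost, it would be absorbed by the slack in the final inequality $(B-A)+1/|\xi|\le(B-A+1)(1+1/|\xi|)$, since the right side exceeds the left by $(B-A)/|\xi|+1$. If $\xi<0$, we conjugate the sum (which preserves its modulus) to reduce to $\xi>0$. The case $A=B$ is trivial.
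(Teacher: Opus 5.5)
Your proof is correct and is essentially the paper's argument. The pointwise bound $\min(M,1/(2\dist{y}))$, the split at $1/(2M)$ giving $1+\log M$ per period, the covering of an interval of length $\lambda$ by $\lceil\lambda\rceil\le\lambda+1$ periods starting at its left endpoint, and the final inequality $(B-A)+1/|\xi|\le(B-A+1)(1+1/|\xi|)$ all appear there in the same form.

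The one flaw is the hedge in your last paragraph. An extra unit interval would add $1/|\xi|$ to the factor $(B-A)+1/|\xi|$, and the slack $(B-A)/|\xi|+1$ does not cover this when $B-A<1$ and $|\xi|<1-(B-A)$. The remark is not needed, though, since $\lceil\lambda\rceil\le\lambda+1$ already gives the count you use.
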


\begin{proof}
Write
\[
H_M(t)=\left|\sum_{j=0}^{M-1}e(jt)\right|.
\]
The sum starting at $j=1$ differs by the factor $e(t)$, so it has the
same absolute value. The function $H_M$ is continuous, nonnegative,
even, and periodic with period one. The geometric-sum identity gives
\[
H_M(t)\le M,\qquad H_M(t)|e(t)-1|\le2.
\]
For $0\le t\le1/2$ we have
$|e(t)-1|=2\sin(\pi t)\ge4t$. Splitting the integral at $1/(2M)$
therefore gives
\[
\int_0^{1/2}H_M(t)\,dt
\le M\frac1{2M}+\int_{1/(2M)}^{1/2}\frac{dt}{2t}
=\frac12+\frac12\log M.
\]
Evenness and periodicity imply
\begin{equation}\label{eq:geometric-period}
\int_0^1 H_M(t)\,dt\le1+\log M
\le\left(1+\frac1{\log2}\right)\log(2M).
\end{equation}

For any nonnegative continuous function $H$ of period one and any
$\alpha\le\beta$, take $q=\lceil\beta-\alpha\rceil$. Since
$[\alpha,\beta]\subseteq[\alpha,\alpha+q]$, periodicity gives
\[
\int_\alpha^\beta H(t)\,dt
\le q\int_0^1H(t)\,dt
\le(\beta-\alpha+1)\int_0^1H(t)\,dt.
\]
Changing variables $t=\xi x$, and reversing the endpoints when $\xi<0$,
we obtain
\[
\int_A^B H_M(\xi x)\,dx
\le\left(B-A+\frac1{|\xi|}\right)\int_0^1H_M(t)\,dt.
\]
Combine this with \eqref{eq:geometric-period} and
\[
B-A+\frac1{|\xi|}\le(B-A+1)\left(1+\frac1{|\xi|}\right)
\]
to prove \eqref{eq:geometric}.
\end{proof}

\begin{lemma}\label{lem:frequencies}
Let $c\in\RR$, $a\in\ZZ$, and let $Q,u,v,D$ be positive integers such that
\[
\gcd(a,Q)=1,\qquad
\left|c-\frac aQ\right|<\frac1{Q^2},\qquad 2vD\le Q.
\]
All frequencies $u(h/v+ck)$ with $|h|,|k|\le D$ and $(h,k)\ne(0,0)$
are nonzero, and
\begin{equation}\label{eq:freq}
\sum_{\substack{|h|,|k|\le D\\(h,k)\ne(0,0)}}
\left(1+\frac1{|u(h/v+ck)|}\right)
\le18\left(D^2+\frac vu DQ\right).
\end{equation}
\end{lemma}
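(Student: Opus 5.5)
The plan is to write $ck = ak/Q + k\theta$ with $\theta = c - a/Q$, so that $|\theta| < Q^{-2}$. Then
\[
\frac hv + ck = \frac{hQ + akv}{vQ} + k\theta .
\]
Put $n = hQ + akv \in \ZZ$.

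\emph{Step 1: nonvanishing.} If $k = 0$, then $h \neq 0$ and the frequency is $uh/v \neq 0$.

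If $k \neq 0$, suppose first that $n = 0$. Then $Q \mid akv$, and $\gcd(a,Q) = 1$ gives $Q \mid kv$. But $0 < |k|v \le vD \le Q/2$, which is impossible. Hence $|n| \ge 1$ and
\[
\left|\frac{n}{vQ}\right| \ge \frac{1}{vQ}, \qquad |k\theta| < \frac{D}{Q^2} \le \frac{1}{2vQ},
\]
the last inequality by $2vD \le Q$. Therefore
\[
\left|\frac hv + ck\right| \ge \frac{|n|}{vQ} - \frac{1}{2vQ} \ge \frac{|n|}{2vQ} > 0 .
\]

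\emph{Step 2: the constant part.} The number of pairs is at most $(2D+1)^2 \le 9D^2$. This handles the summand $1$.

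\emph{Step 3: the reciprocal part, $k = 0$.} This contributes
\[
\sum_{0<|h|\le D} \frac{v}{u|h|} \le \frac{2v}{u}(1 + \log D).
\]
Since $\log D \ll D$ and $Q \ge 2vD \ge 2$, this is at most a constant times $\frac vu DQ$. A cleaner bound is $\frac{2v}{u} \cdot 2D \le \frac{4v}{u}DQ$.

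\emph{Step 4: the reciprocal part, $k \neq 0$.} The term is at most $2vQ/(u|n|)$. The key counting point is injectivity of $(h,k) \mapsto n$.

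Suppose $hQ + akv = h'Q + ak'v$. Then $Q \mid a(k-k')v$, so $Q \mid (k-k')v$. Since $|k-k'|v \le 2Dv \le Q$, we get $k - k' \in \{0, \pm Q/v\}$.

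\emph{Option one.} If $2vD < Q$ strictly, this forces $k = k'$ and then $h = h'$. So each nonzero $n$ arises at most once.

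\emph{Option two (boundary case).} Allowing the case $|k-k'| = Q/v$, each value $n$ has at most two preimages.

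In either case
\[
\sum \frac{1}{|n|} \le 2 \sum_{1 \le |n| \le N} \frac{1}{|n|},
\]
where $N = DQ + |a| D v$. Here the size of $|a|$ is harmless: $|a| \le |c|Q + 1$.

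This gives a bound of the form $\frac{vQ}{u}\log(\cdots)$, which is not the claimed $\frac vu DQ$ with no log. So instead I would bound crudely: there are at most $2D \cdot (2D+1)$ pairs with $k \neq 0$, each contributing at most $2vQ/u$. That yields $\le \frac vu\, 12 D^2 Q$, too large by a factor of $D$.

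The better route, which I expect to be the main obstacle to get without logarithms, is to use injectivity together with the fact that the $n$ are distinct nonzero integers. The sum of $1/|n|$ over at most $(2D+1)\cdot 2D$ distinct nonzero integers is maximized by the smallest ones, giving $\ll \log D$. Since $\log D \le D$, the total is $\le \frac{vQ}{u} \cdot 2 \cdot 2(1 + \log(3D^2)) \le C\,\frac vu DQ$.

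Tracking constants, $1 + \log(9D^2) \le 4D$ for $D \ge 1$ (check $D = 1$: $1 + \log 9 \approx 3.2 \le 4$). The $k \neq 0$ part is then at most $2 \cdot 2 \cdot 4D \cdot \frac{vQ}{u} = 16\,\frac vu DQ$, with the factor $2$ covering the two-to-one boundary case. Adding Step 3, with its bound $\le 4\frac vu DQ$ absorbed after a little care, and $9D^2$ stays within $18(D^2 + \frac vu DQ)$. Some slack adjustment of these constants may be needed, but all bounds are elementary.
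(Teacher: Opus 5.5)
Your argument is correct in substance, but it takes a different route from the paper's.

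\textbf{The paper's route.} It proves the separation $|cw-b|\ge 1/(2Q)$ for $0<|w|\le Q/2$ and then works row by row. For each fixed $k\ne0$, at most two integers $h$ have $|h+cvk|<1$, and each of these contributes at most $2Q$. The remaining terms contribute at most $1$ each. So a row gives at most $4Q+2D+1\le 6Q$, and the $\le 3D$ rows give $18\frac vu DQ$.

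\textbf{Your route.} You pass to the integer $n=hQ+akv$ and show $|h/v+ck|\ge |n|/(2vQ)$. You then use that $(h,k)\mapsto n$ is injective, or at most two-to-one when $2vD=Q$, where a collision forces $\{k,k'\}=\{D,-D\}$. This turns the whole $k\ne0$ part into a harmonic sum over $O(D^2)$ distinct nonzero integers. The global view sees that the near-resonant terms from different rows sit at different distances from $0$, so it gives the sharper bound $O\bigl(\frac vu Q\log(2D)\bigr)$ instead of $O(\frac vu DQ)$. The paper's argument is shorter and needs no injectivity or boundary-case analysis. The factor $D/\log D$ it loses is harmless in its application.

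\textbf{A bookkeeping slip.} Your final constant tracking drops a factor of $2$, so as written it does not deliver the constant $18$. The factors are:
\begin{itemize}
\item the per-term bound, which gives $2vQ/u$;
\item the sum over distinct nonzero integers, which gives $2H_{D(2D+1)}$, with $H_n=\sum_{j\le n}1/j$;
\item the two-to-one case, which gives another $2$.
\end{itemize}
With your estimate $1+\log(9D^2)\le 4D$, the $k\ne0$ part alone comes to $32\frac vu DQ$, not $16\frac vu DQ$. That exceeds the stated bound in general, for example when $u=1$.

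\textbf{The fix.} Keep the harmonic numbers rather than converting $\log$ to linear. Since $2H_D\le 2D\le Q$, the reciprocal part is at most
\[
\frac vu\bigl(2H_D+8Q\,H_{D(2D+1)}\bigr)\le\frac vu\,Q\bigl(1+8H_{D(2D+1)}\bigr)\le 18\,\frac vu\,DQ .
\]
For the last inequality, use $H_n\le 1+\log n$ and $D(2D+1)\le 3D^2$. These reduce it to $9+8\log 3+16\log D\le 18D$, which holds for every $D\ge1$ (at $D=1$ it reads $17.79\ldots\le18$). Adding at most $9D^2$ for the constant terms gives the stated bound with constant $18$.
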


\begin{proof}
First let $b,w\in\ZZ$ with $0<|w|\le Q/2$. The integer $aw-bQ$
is nonzero: otherwise coprimality would imply $Q\mid w$, contrary
to $0<|w|<Q$. Hence
\begin{equation}\label{eq:rational-separation}
|cw-b|
\ge\frac{|aw-bQ|}{Q}-\left|c-\frac aQ\right||w|
\ge\frac1Q-\frac{|w|}{Q^2}
\ge\frac1{2Q}.
\end{equation}
If $0<|k|\le D$, apply this with $w=vk$ and $b=-h$ to obtain
\begin{equation}\label{eq:shifted-separation}
|h+cvk|\ge\frac1{2Q},\qquad
|u(h/v+ck)|\ge\frac{u}{2Qv}.
\end{equation}
The frequencies with $k=0$ and $h\ne0$ are also nonzero.

Fix a nonzero $k$ with $|k|\le D$, and put $s=cvk$. Among the
integers $h$ with $|h+s|<1$ there are at most two. Indeed, writing
$s=\lfloor s\rfloor+\theta$ with $0\le\theta<1$ shows that such an
$h$ must belong to
$\{-\lfloor s\rfloor,-\lfloor s\rfloor-1\}$.
By \eqref{eq:shifted-separation}, each of these two terms contributes
at most $2Q$ to the reciprocal sum. Every other term contributes at
most one. Consequently
\[
\sum_{|h|\le D}\frac1{|h+cvk|}\le4Q+2D+1\le6Q,
\]
where we used $2D\le2vD\le Q$ and $Q\ge1$.
For $k=0$, excluding $h=0$, we instead have
\[
\sum_{0<|h|\le D}\frac1{|h|}\le2D\le6Q.
\]
Thus, using the exact rescaling
\[
\frac1{|u(h/v+ck)|}=\frac vu\,\frac1{|h+cvk|},
\]
and applying the corresponding bound to each of the $2D+1\le3D$
values of $k$, we obtain
\[
\sum_{\substack{|h|,|k|\le D\\(h,k)\ne(0,0)}}
\frac1{|u(h/v+ck)|}\le18\frac vu DQ.
\]
The number of summands is at most $(2D+1)^2\le9D^2$.
Adding their constant contributions proves \eqref{eq:freq}.
\end{proof}

For positive integers $N,u,v$, define
\begin{equation}\label{eq:Sdef}
S_{u,v}(x)=\#\left\{1\le n\le N:
\begin{array}{l}
u\mid n,\quad v\mid\lfloor nx\rfloor,\\
0<\{nx\}<\eta,\quad 0<\{cnx\}<\eta
\end{array}\right\}.
\end{equation}
Every positive integer is understood to divide zero. The function $S_{u,v}$
is measurable, since it is a finite sum of indicators of measurable sets,
and $0\le S_{u,v}\le N$. In particular, both $S_{u,v}$ and the remainder
defined below are integrable on every bounded interval.

\begin{lemma}\label{lem:rectangle}
Let $0<A<B$, let $N,u,v,D,Q$ be positive integers with $u\le N$, and let
$a\in\ZZ$ satisfy
\[
(a,Q)=1,\qquad \left|c-\frac aQ\right|<Q^{-2},
\qquad 2vD\le Q.
\]
Suppose that $0<\eta<1$ and $0\le t\le\eta/v$. Put
$\ell_1=\eta/v$ and $\ell_2=\eta$. For $i=1,2$, suppose that there are
real-valued trigonometric polynomials $U_i,V_i$ of degree at most $D$
such that, for every $y\in\RR$,
\[
V_i(y)\le\ind_{(0,\ell_i)}(\{y\})\le U_i(y),
\]
and such that their Fourier coefficients satisfy
\begin{align*}
\widehat U_i(0)&=\ell_i+t,&
\widehat V_i(0)&=\ell_i-t,\\
|\widehat U_i(h)|&\le\ell_i+t,&
|\widehat V_i(h)|&\le\ell_i+t\qquad (|h|\le D).
\end{align*}
The pointwise inequalities include the endpoints of the open intervals.
Define
\[
R_{u,v}(x)=S_{u,v}(x)-\frac{N\eta^2}{uv},
\qquad
K_{A,B}=(B-A+1)\left(1+\frac1{\log 2}\right).
\]
Then
\begin{equation}\label{eq:Rbound}
\begin{split}
\int_A^B|R_{u,v}(x)|\,dx
\le{}&5(B-A)N\eta t+(B-A)\\
&+216\eta^2K_{A,B}\log(2N)
\left(\frac{D^2}{v}+\frac{DQ}{u}\right).
\end{split}
\end{equation}
In particular, there is no additional requirement relating $t$ to $1/D$.
\end{lemma}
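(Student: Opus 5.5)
The plan is to write $S_{u,v}$ as a sum over a single variable of a product of two periodic interval indicators. We then bound this product above and below by products of the given trigonometric polynomials $U_i,V_i$ and expand into exponential sums. The zero frequency produces the main term $N\eta^2/(uv)$ up to the errors $5(B-A)N\eta t+(B-A)$. The nonzero frequencies are integrated with Lemma~\ref{lem:geometric} and summed with Lemma~\ref{lem:frequencies}.

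\textbf{Step 1 (reduction).} Write $n=um$ with $1\le m\le M=\lfloor N/u\rfloor$; note $M\ge1$ since $u\le N$. I claim that $v\mid\lfloor nx\rfloor$ together with $0<\{nx\}<\eta$ is equivalent to $0<\{nx/v\}<\eta/v$.
\begin{itemize}
\item If $\lfloor nx\rfloor=vq$, then $\{nx/v\}=\{nx\}/v\in(0,\eta/v)$.
\item Conversely, if $\{nx/v\}\in(0,\eta/v)$, then $nx=v\lfloor nx/v\rfloor+v\{nx/v\}$ with $v\{nx/v\}\in(0,\eta)\subset(0,1)$. This forces $\lfloor nx\rfloor=v\lfloor nx/v\rfloor$ and $\{nx\}=v\{nx/v\}<\eta$.
\end{itemize}
Hence
\[
S_{u,v}(x)=\sum_{m\le M}\chi_1(umx/v)\,\chi_2(cumx),\qquad \chi_i=\ind_{(0,\ell_i)}(\{\cdot\}).
\]

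\textbf{Step 2 (product majorant and minorant).} Since $U_i\ge\chi_i\ge0$, we have $\chi_1\chi_2\le U_1U_2$. For the lower bound, expand $(U_1-\chi_1)(U_2-\chi_2)\ge0$ and then use $\chi_i\ge V_i$ with $U_j\ge0$. This gives
\[
\chi_1\chi_2\ge V_1U_2+U_1V_2-U_1U_2 .
\]
Let $P^{+}=U_1\otimes U_2$ and $P^{-}=V_1\otimes U_2+U_1\otimes V_2-U_1\otimes U_2$, evaluated at $(umx/v,cumx)$ and summed over $m$. Then $\Sigma^-(x)\le S_{u,v}(x)\le\Sigma^+(x)$, and so $|R_{u,v}|\le|\Sigma^+-N\eta^2/(uv)|+|\Sigma^--N\eta^2/(uv)|$.

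Expanding each product gives
\[
\sum_{|h|,|k|\le D}\widehat A(h)\widehat B(k)\sum_{m\le M}e\bigl(mu(h/v+ck)x\bigr).
\]

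\textbf{Step 3 (zero frequency).} The $(0,0)$ term of $\Sigma^+$ is $M(\ell_1+t)(\ell_2+t)$. For $\Sigma^-$ it is
\[
M\bigl[(\ell_1-t)(\ell_2+t)+(\ell_1+t)(\ell_2-t)-(\ell_1+t)(\ell_2+t)\bigr].
\]
Replacing $M$ by $N/u$ costs at most $(\ell_1+t)(\ell_2+t)\le 4\eta^2/v<1$ per $x$; the summed rounding errors give the term $(B-A)$. Comparing with $(N/u)\ell_1\ell_2=N\eta^2/(uv)$, the discrepancies are combinations of $t\ell_1$, $t\ell_2$ and $t^2$. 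Using $t\le\eta/v\le\eta$, the two contributions total at most $5N\eta t$. Integrating over $[A,B]$ gives $5(B-A)N\eta t$.

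\textbf{Step 4 (nonzero frequencies).} This is the step I expect to carry the real content and the constant bookkeeping.
\begin{itemize}
\item By Lemma~\ref{lem:frequencies} (using $2vD\le Q$), each frequency $\xi=u(h/v+ck)$ with $(h,k)\ne(0,0)$ is nonzero.
\item Lemma~\ref{lem:geometric} then bounds $\int_A^B|\sum_{m\le M}e(m\xi x)|\,dx$ by $K_{A,B}(1+1/|\xi|)\log(2N)$.
\item Each coefficient product is at most $(\ell_1+t)(\ell_2+t)\le(2\eta/v)(2\eta)=4\eta^2/v$.
\item Across the four products appearing in $\Sigma^+$ and $\Sigma^-$, the coefficient weights total at most $12\eta^2/v$ per frequency.
\end{itemize}
Summing with \eqref{eq:freq} gives
\[
12\frac{\eta^2}{v}K_{A,B}\log(2N)\cdot18\Bigl(D^2+\frac vu DQ\Bigr)=216\eta^2K_{A,B}\log(2N)\Bigl(\frac{D^2}{v}+\frac{DQ}{u}\Bigr).
\]
No relation between $t$ and $1/D$ is used anywhere, which is the final remark of the lemma.
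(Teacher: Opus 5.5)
Your outline is the paper's proof: the same reduction $v\mid\lfloor y\rfloor,\ 0<\{y\}<\eta\iff 0<\{y/v\}<\eta/v$, the same $P^{+}=U_1U_2$ and $P^{-}=V_1U_2+U_1V_2-U_1U_2$, and the same use of Lemmas~\ref{lem:geometric} and~\ref{lem:frequencies}. The gap is in the constant bookkeeping, which is the real content of \eqref{eq:Rbound}. Your combination step $|R_{u,v}|\le|\Sigma^+-N\eta^2/(uv)|+|\Sigma^--N\eta^2/(uv)|$ \emph{adds} the majorant and minorant errors. With that step, three of your numerical claims fail under the lemma's hypotheses, which give only $0<\eta<1$ and $0\le t\le\eta/v$.
\begin{itemize}
\item The exact zero-frequency discrepancies from $M\ell_1\ell_2$ are $M\bigl(t(\ell_1+\ell_2)+t^2\bigr)$ and $M\bigl(t(\ell_1+\ell_2)+3t^2\bigr)$. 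At $u=v=1$, $t=\eta$ their sum is $8N\eta t$, not at most $5N\eta t$.
\item For nonzero frequencies your sum involves four products: $U_1U_2$ from $\Sigma^+$ and three from $\Sigma^-$. Each is bounded only by $(\ell_1+t)(\ell_2+t)\le4\eta^2/v$. The total weight is therefore $16\eta^2/v$, not $12\eta^2/v$, which gives $288$ instead of $216$.
\item The rounding cost $(\ell_1+t)(\ell_2+t)\le4\eta^2/v$ is not $<1$ when $v=1$ and $\eta\ge1/2$. Your sum also incurs a second rounding cost from $\Sigma^-$, so the term $(B-A)$ is not justified.
\end{itemize}

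The fix, which is what the paper does, is to use the two one-sided inequalities $S_{u,v}-M\eta^2/v\le\Sigma^+-M\eta^2/v$ and $M\eta^2/v-S_{u,v}\le M\eta^2/v-\Sigma^-$. Then $|S_{u,v}-M\eta^2/v|$ is bounded by the \emph{larger} of the two right-hand sides, not their sum. Each zero-frequency discrepancy is at most $\Delta=t(\eta/v+\eta)+3t^2\le5\eta t$. Every nonzero coefficient of $P^+$, and separately of $P^-$, is at most $3(\eta/v+t)(\eta+t)\le12\eta^2/v$. Compare with $M\eta^2/v$ first, and only afterwards replace $M$ by $N/u$; this costs $(N/u-M)\eta^2/v<1$, because $\eta^2/v<1$. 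Integrating the resulting pointwise bound gives exactly \eqref{eq:Rbound}. Your weaker constants would be harmless in Proposition~\ref{prop:upper}, where $\eta\to0$ and $t=3\eta z^{-14}$, but they do not prove the lemma as stated.
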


\begin{proof}
Write $M=\lfloor N/u\rfloor\ge1$. For a positive integer $v$ and any real
$y$, the assumptions $0<\eta<1$ give the exact equivalence
\[
\bigl(v\mid\lfloor y\rfloor\ \hbox{and}\ 0<\{y\}<\eta\bigr)
\quad\Longleftrightarrow\quad
0<\{y/v\}<\eta/v.
\]
Indeed, in the forward direction write $y=vm+r$ with $m\in\ZZ$ and
$0<r<\eta$. In the reverse direction put $m=\lfloor y/v\rfloor$ and
$r=v\{y/v\}$. Then $y=vm+r$ and $0<r<\eta<1$, so
$\lfloor y\rfloor=vm$ and $\{y\}=r$. Strict inequalities ensure that
the equivalence also treats all boundary cases correctly.
Reindexing the positive multiples of $u$ by $n=uj$, we therefore obtain
\[
S_{u,v}(x)=\sum_{j=1}^M
\chi_1(ujx/v)\chi_2(cujx),\qquad
\chi_i(y)=\ind_{(0,\ell_i)}(\{y\}).
\]

For two independent variables $y_1,y_2$, set
\begin{align*}
P^+(y_1,y_2)&=U_1(y_1)U_2(y_2),\\
P^-(y_1,y_2)&=V_1(y_1)U_2(y_2)
+U_1(y_1)V_2(y_2)-U_1(y_1)U_2(y_2).
\end{align*}
Then $P^-\le\chi_1\chi_2\le P^+$ pointwise. For the lower inequality,
the following identity, with arguments suppressed, has only nonnegative
terms on its right-hand side:
\[
\chi_1\chi_2-P^-=
(U_1-\chi_1)(U_2-\chi_2)
+(\chi_1-V_1)U_2+U_1(\chi_2-V_2).
\]
Here $U_i\ge\chi_i\ge0$. The two-dimensional constant coefficients are
\begin{align*}
\widehat P^+(0,0)&=\frac{\eta^2}{v}
+t\left(\frac\eta v+\eta\right)+t^2,\\
\widehat P^-(0,0)&=\frac{\eta^2}{v}
-t\left(\frac\eta v+\eta\right)-3t^2.
\end{align*}
Thus each differs from $\eta^2/v$ by at most
\[
\Delta=t\left(\frac\eta v+\eta\right)+3t^2\le5\eta t,
\]
because $t\le\eta/v\le\eta$. Moreover, every coefficient of either
$P^+$ or $P^-$ has absolute value at most
\[
3\left(\frac\eta v+t\right)(\eta+t)
\le\frac{12\eta^2}{v}.
\]

Evaluate these finite Fourier expansions at
$(y_1,y_2)=(ujx/v,cujx)$ and sum over $1\le j\le M$. The frequencies
that result from a pair $(h,k)$ are
\[
\xi_{h,k}=u(h/v+ck),\qquad |h|,|k|\le D.
\]
The constant term corresponds to $(h,k)=(0,0)$. Taking absolute values
of the remaining finite sums gives
\[
\left|S_{u,v}(x)-\frac{M\eta^2}{v}\right|
\le5M\eta t+
\frac{12\eta^2}{v}
\sum_{\substack{|h|,|k|\le D\\(h,k)\ne(0,0)}}
\left|\sum_{j=1}^M e(j\xi_{h,k}x)\right|.
\]
Since $0\le N/u-M<1$ and $\eta^2/v\le1$, it follows for every
$x\in[A,B]$ that
\begin{equation}\label{eq:Rpoint}
|R_{u,v}(x)|\le5N\eta t+1+
\frac{12\eta^2}{v}
\sum_{\substack{|h|,|k|\le D\\(h,k)\ne(0,0)}}
\left|\sum_{j=1}^M e(j\xi_{h,k}x)\right|.
\end{equation}

By Lemma~\ref{lem:frequencies}, all the frequencies in
\eqref{eq:Rpoint} are nonzero and
\[
\sum_{\substack{|h|,|k|\le D\\(h,k)\ne(0,0)}}
\left(1+\frac1{|\xi_{h,k}|}\right)
\le18\left(D^2+\frac vu DQ\right).
\]
Lemma~\ref{lem:geometric} bounds the integral of each corresponding
geometric sum by
\[
K_{A,B}\left(1+\frac1{|\xi_{h,k}|}\right)\log(2M).
\]

All terms in \eqref{eq:Rpoint} are integrable on $[A,B]$. Integrate that
inequality, apply the preceding two bounds, and use
$\log(2M)\le\log(2N)$.
The product of the coefficient constant $12$ and the frequency constant
$18$ is $216$, giving \eqref{eq:Rbound}.
\end{proof}

\section{Two Selberg squares and an integrated upper bound}

All divisor indices in this section are positive integers. For a real cutoff
$z\ge2$, write $Z=\lfloor z\rfloor$ and
\[
\mathcal G(z)=\sum_{d=1}^{Z}\frac{\mu(d)^2}{\varphi(d)}.
\]
In particular, $\mathcal G(z)\ge1$. The following finite construction supplies
both the sieve weights and the logarithmic lower bound needed below.

\begin{lemma}\label{lem:sieve}
For $z\ge2$ there are real numbers $\lambda_d$, indexed by integers $d\ge1$,
with $\lambda_d=0$ for $d>Z$, such that
\begin{equation}\label{eq:weightprops}
\lambda_1=1,\qquad |\lambda_d|\le d\quad(d\ge1),\qquad
\sum_{d=1}^{Z}\sum_{e=1}^{Z}
\frac{\lambda_d\lambda_e}{[d,e]}=\frac1{\mathcal G(z)}.
\end{equation}
Moreover, the explicit bound
\begin{equation}\label{eq:G-elementary}
\mathcal G(z)\ge\frac12\log(Z+1)\ge\frac12\log z
\end{equation}
holds.
\end{lemma}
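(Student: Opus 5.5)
The plan is the classical Selberg construction with the explicit weights
\[
\lambda_d=\frac{\mu(d)\,d}{\varphi(d)\,\mathcal G(z)}\sum_{\substack{k\le Z/d\\(k,d)=1}}\frac{\mu(k)^2}{\varphi(k)}\qquad(d\le Z),
\]
and $\lambda_d=0$ for $d>Z$. The quadratic form is diagonalized through $\varphi$.

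\textbf{Diagonalizing the quadratic form.} Using $1/[d,e]=(d,e)/(de)$ and $(d,e)=\sum_{f\mid(d,e)}\varphi(f)$, we get
\[
\sum_{d,e}\frac{\lambda_d\lambda_e}{[d,e]}=\sum_{f\le Z}\varphi(f)\,y_f^2,\qquad y_f=\sum_{f\mid d}\frac{\lambda_d}{d}.
\]
Möbius inversion on the divisibility poset up to $Z$ gives
\[
\frac{\lambda_d}{d}=\sum_{d\mid f}\mu(f/d)\,y_f .
\]
The condition $\lambda_1=1$ becomes the linear constraint $\sum_f\mu(f)y_f=1$.

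\textbf{Choosing the $y_f$.} Set
\[
y_f=\frac{\mu(f)}{\varphi(f)\,\mathcal G(z)}\quad(f\le Z).
\]
Then $\sum_f\mu(f)y_f=\mathcal G(z)/\mathcal G(z)=1$, so $\lambda_1=1$. The form equals
\[
\sum_f\frac{\mu(f)^2}{\varphi(f)\,\mathcal G(z)^2}=\frac{1}{\mathcal G(z)} .
\]

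\textbf{Recovering $\lambda_d$ and bounding it.} Writing $f=dk$, only squarefree $dk$ with $(k,d)=1$ contribute. Since $\varphi(dk)=\varphi(d)\varphi(k)$, this yields exactly the displayed formula for $\lambda_d$.

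For the bound $|\lambda_d|\le d$, I use the standard trick of grouping by the divisor $g=(f,d)$:
\[
\mathcal G(z)=\sum_{g\mid d}\ \sum_{\substack{k\le Z/g\\(k,d)=1}}\frac{\mu(g)^2\mu(k)^2}{\varphi(g)\varphi(k)}
\ge\Bigl(\sum_{g\mid d}\frac{\mu(g)^2}{\varphi(g)}\Bigr)\sum_{\substack{k\le Z/d\\(k,d)=1}}\frac{\mu(k)^2}{\varphi(k)} .
\]
For squarefree $d$,
\[
\sum_{g\mid d}\frac{\mu(g)^2}{\varphi(g)}=\frac{d}{\varphi(d)} .
\]
Hence the inner sum is at most $\varphi(d)\mathcal G(z)/d$, which gives $|\lambda_d|\le 1\le d$.

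\textbf{The lower bound \eqref{eq:G-elementary}.} For squarefree $n$,
\[
\frac{1}{\varphi(n)}=\frac1n\prod_{p\mid n}\Bigl(1-\frac1p\Bigr)^{-1}=\frac1n\sum_{\mathrm{rad}(m)=n}\frac{n}{m}.
\]
Therefore
\[
\mathcal G(z)=\sum_{n\le Z}\ \sum_{\mathrm{rad}(m)=n}\frac1m\ge\sum_{m\le Z}\frac1m ,
\]
since every $m\le Z$ has $\mathrm{rad}(m)\le Z$. Finally,
\[
\sum_{m\le Z}\frac1m\ge\log(Z+1)\ge\tfrac12\log(Z+1),
\]
and $Z+1>z$ gives the second inequality.

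I expect the main care point to be the bookkeeping in the Möbius inversion, namely the coprimality and squarefree restrictions. The inequality $|\lambda_d|\le d$ is comfortable; in fact $|\lambda_d|\le 1$ holds.
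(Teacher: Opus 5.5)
Your proof is correct, and its core is the same as the paper's. You use the same choice $y_f=\mu(f)/(\varphi(f)\mathcal G(z))$, the same diagonalization through $(d,e)=\sum_{f\mid(d,e)}\varphi(f)$, and the same finite M\"obius inversion over multiples up to $Z$. (The paper defines $\lambda_d=d\sum_{d\mid n\le Z}\mu(n/d)y_n$ directly and then verifies $\sum_{r\mid d}\lambda_d/d=y_r$.)

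The two estimates, however, are proved differently.

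For the size of the weights, the paper never computes $\lambda_d$ in closed form. It notes that $\sum_{d\mid n\le Z}\mu(n)^2/\varphi(n)$ is a subsum of $\mathcal G(z)$, which gives $|\lambda_d|\le d$ in one line. You instead pass to the explicit formula and use Selberg's grouping by $(n,d)$, together with $\sum_{g\mid d}\mu(g)^2/\varphi(g)=d/\varphi(d)$, to get the classical bound $|\lambda_d|\le1$. This costs a short computation but is sharper. In Proposition~\ref{prop:upper} it would replace the weight factor $(\sum_{d\le Z}|\lambda_d|)^4\le Z^8$ by $Z^4$; the paper does not need this slack because $\delta$ is chosen small.

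For the logarithmic bound, the paper writes $n=b^2a$ with $a$ squarefree and uses $\sum_b b^{-2}\le2$ and $\varphi(a)\le a$, obtaining $H_Z\le2\mathcal G(z)$. Your exact identity $1/\varphi(n)=\sum_{\mathrm{rad}(m)=n}1/m$ for squarefree $n$ gives $\mathcal G(z)\ge H_Z\ge\log(Z+1)$ with no loss. So the factor $\tfrac12$ in the statement is superfluous, and the constant $K$ in Proposition~\ref{prop:upper} could be reduced by a factor of $4$.

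Both routes are completely elementary. The paper's arguments are marginally shorter, while yours are the standard sieve-theoretic ones and give better constants.
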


\begin{proof}
For $1\le n\le Z$, put
\[
y_n=\frac{\mu(n)}{\varphi(n)\mathcal G(z)},
\qquad
\lambda_d=d\sum_{\substack{1\le n\le Z\\d\mid n}}
\mu(n/d)y_n\quad(d\ge1).
\]
The sum defining $\lambda_d$ is empty if $d>Z$. Since
$|\mu(n)|=\mu(n)^2$ and $|\mu(n/d)|\le1$, we have
\[
|\lambda_d|
\le\frac{d}{\mathcal G(z)}
\sum_{\substack{1\le n\le Z\\d\mid n}}
\frac{|\mu(n/d)|\,|\mu(n)|}{\varphi(n)}
\le\frac{d}{\mathcal G(z)}
\sum_{n=1}^{Z}\frac{\mu(n)^2}{\varphi(n)}=d.
\]
At $d=1$ the same definition gives
$\lambda_1=\mathcal G(z)/\mathcal G(z)=1$.

For every $1\le r\le Z$, finite M\"obius inversion over multiples gives
\begin{align*}
\sum_{\substack{1\le d\le Z\\r\mid d}}\frac{\lambda_d}{d}
&=\sum_{\substack{1\le n\le Z\\r\mid n}}y_n
  \sum_{\substack{d\mid n\\r\mid d}}\mu(n/d)\\
&=\sum_{\substack{1\le n\le Z\\r\mid n}}y_n
  \sum_{h\mid n/r}\mu(h)=y_r.
\end{align*}
Here the inner divisor sum is one when $n=r$ and zero otherwise. Using
$de=[d,e](d,e)$ and $\sum_{r\mid k}\varphi(r)=k$, we consequently obtain
\begin{align*}
\sum_{d=1}^{Z}\sum_{e=1}^{Z}\frac{\lambda_d\lambda_e}{[d,e]}
&=\sum_{d=1}^{Z}\sum_{e=1}^{Z}\frac{\lambda_d\lambda_e}{de}
  \sum_{r\mid(d,e)}\varphi(r)\\
&=\sum_{r=1}^{Z}\varphi(r)
  \left(\sum_{\substack{1\le d\le Z\\r\mid d}}
  \frac{\lambda_d}{d}\right)^2
=\sum_{r=1}^{Z}\varphi(r)y_r^2
=\frac1{\mathcal G(z)}.
\end{align*}

For the logarithmic bound, let $H_Z=\sum_{n=1}^{Z}1/n$. Every positive
integer can be written $n=b^2a$ with $a$ squarefree and $a,b\ge1$.
If $n\le Z$, then both $a$ and $b$ are at most $Z$. Dropping the restriction
$b^2a\le Z$ from this decomposition, and then using $\varphi(a)\le a$, gives
\begin{align*}
H_Z
&\le\left(\sum_{b=1}^{Z}\frac1{b^2}\right)
  \left(\sum_{\substack{1\le a\le Z\\a\ \mathrm{squarefree}}}\frac1a\right)\\
&\le2\sum_{\substack{1\le a\le Z\\a\ \mathrm{squarefree}}}
  \frac1{\varphi(a)}
=2\mathcal G(z).
\end{align*}
The bound on the first factor is elementary: for $b\ge2$,
$b^{-2}\le(b(b-1))^{-1}$, and the latter terms telescope to give
$\sum_{b=1}^{Z}b^{-2}\le2$. Finally,
$\log(Z+1)\le H_Z$ by integration of $1/t$, while
$z<Z+1$. This proves \eqref{eq:G-elementary} without a squarefree-density
asymptotic.
\end{proof}

\begin{proposition}\label{prop:upper}
There is a constant $K>0$ such that, for all sufficiently large convergent
scales $X$ in \eqref{eq:scale}, there is a nonnegative measurable function
$J_X$, integrable on $[A,B]$, for which
\begin{equation}\label{eq:upper}
F_X(x)\le KG_X+J_X(x)\quad(x\in[A,B]),
\qquad
\int_A^B J_X(x)\,dx=o(G_X).
\end{equation}
One may take $K=32/(A\delta^2)$, where
$\delta=(1-3\kappa)/200$.
\end{proposition}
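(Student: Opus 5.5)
We bound $F_X(x)$ by sieving with two Selberg squares, one for the prime denominator $p$ and one for the prime numerator $r$. Then we evaluate the resulting sums with Lemma~\ref{lem:rectangle}.

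\textbf{Step 1: pointwise majorant.} Let $z=X^\delta$ and take the weights $\lambda_d$ from Lemma~\ref{lem:sieve}. Every counted $p$ satisfies $p\ge X/(2B)>z$, and $r=\lfloor px\rfloor\ge pA-1>z$ for large $X$. Hence $\ind_{\PP}(p)\le(\sum_{d\mid p}\lambda_d)^2$ and $\ind_{\PP}(r)\le(\sum_{e\mid r}\lambda_e)^2$, since both squares equal $\lambda_1^2=1$ when $p$ and $r$ have no prime factor $\le z$. Each counted $p\le N$ gives $n=p$ with $0<\{nx\}<\eta$, $0<\{cnx\}<\eta$ and $r=\lfloor nx\rfloor$. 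Dropping the primality and range conditions and expanding the squares gives
\[
F_X(x)\le\sum_{d_1,d_2,e_1,e_2\le z}\lambda_{d_1}\lambda_{d_2}\lambda_{e_1}\lambda_{e_2}\,S_{[d_1,d_2],[e_1,e_2]}(x).
\]
Here $u=[d_1,d_2]\le z^2$ and $v=[e_1,e_2]\le z^2$. The right side is nonnegative because it is a sum over $n$ of products of squares.

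\textbf{Step 2: main term and remainder.} Write $S_{u,v}=N\eta^2/(uv)+R_{u,v}$. By \eqref{eq:weightprops}, the main term factorizes as
\[
N\eta^2\Bigl(\sum\frac{\lambda_{d_1}\lambda_{d_2}}{[d_1,d_2]}\Bigr)\Bigl(\sum\frac{\lambda_{e_1}\lambda_{e_2}}{[e_1,e_2]}\Bigr)=\frac{N\eta^2}{\mathcal G(z)^2}.
\]
By \eqref{eq:G-elementary}, $\mathcal G(z)\ge\frac{\delta}{2}\log X$, so this is at most $4N\eta^2/(\delta^2(\log X)^2)$. Using $N\le 8X/A$ for large $X$, this is bounded by $\frac{32}{A\delta^2}G_X=KG_X$. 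Set
\[
J_X(x)=\sum\bigl|\lambda_{d_1}\lambda_{d_2}\lambda_{e_1}\lambda_{e_2}\bigr|\,|R_{u,v}(x)|.
\]
Then $J_X$ is nonnegative and measurable, and \eqref{eq:upper} holds pointwise.

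\textbf{Step 3: integrating the remainder (main obstacle).} We apply Lemma~\ref{lem:rectangle} with the kernel from Lemma~\ref{lem:kernel-parameters}. That lemma supplies $U_i,V_i$ via Lemma~\ref{lem:interval}, uniformly for $v\le z^2$, with $t=3\rho$, $t\le\eta/v$ and $2vD\le Q$. We use $|\lambda_d|\le d\le z$, so the weight product is at most $z^4$. The number of quadruples is at most $z^4$. The worst case is $u=v=1$. Bounding each term in \eqref{eq:Rbound} then gives
\[
\int_A^BJ_X\ll z^8\Bigl(N\eta\rho+1+\eta^2\log X\,(D^2+DQ)\Bigr).
\]
We check each piece against $G_X\asymp X^{1-2\kappa}(\log X)^{-2}$.

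The first piece is $z^8N\eta\rho\ll X^{1-2\kappa-6\delta}$, which is $o(G_X)$. The constant piece gives $z^8\ll X^{8\delta}$; this is fine because $8\delta<1-2\kappa$.

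For the last piece, $D\ll z^{20}/\eta$ gives $z^8\eta^2D^2\ll X^{48\delta}$. Also
\[
z^8\eta^2DQ\ll X^{28\delta-\kappa+(1+\kappa)/2}=X^{28\delta+(1-\kappa)/2}.
\]
Compared with $X^{1-2\kappa}$, we need $28\delta<(1-3\kappa)/2$, which holds since $\delta=(1-3\kappa)/200$. We also need $48\delta<1-2\kappa$, which is clear.

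This exponent bookkeeping is the delicate part. The sieve-weight losses $z^8$ and the degree $D\sim z^{20}/\eta$ must both fit inside the saving $X^{(1-3\kappa)/2}$ that the convergent scale provides. If a direct sum over $u$ and $v$ is too lossy, I would instead keep the $1/v$ and $1/u$ factors in \eqref{eq:Rbound}, which only improves the bound. With these estimates, $\int_A^BJ_X=o(G_X)$.
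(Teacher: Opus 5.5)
Your proposal is correct and takes essentially the same route as the paper. It uses the same two-Selberg-square majorant $W(n)W(\lfloor nx\rfloor)$, the same factorized main term $N\eta^2/\mathcal G(z)^2\le KG_X$, and the same application of Lemma~\ref{lem:rectangle} with total sieve weight at most $z^8$ and identical exponent checks. The only cosmetic differences are that the paper absorbs $D^2/v$ into $DQ$ via $D\le Q$ instead of checking $48\delta<1-2\kappa$ separately, and that it records the routine side conditions $u\le z^2\le N$ and $\log(2N)\le 2\log X$, which you use implicitly.
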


\begin{proof}
Keep the parameters in \eqref{eq:fixed}--\eqref{eq:etaG}, and set
\begin{equation}\label{eq:parameters}
\delta=\frac{1-3\kappa}{200},\quad z=X^\delta,\quad
Z=\lfloor z\rfloor,\quad
L=\left\lceil\frac{z^{20}}\eta\right\rceil,\quad
\rho=\frac{\eta}{z^{14}},\quad t=3\rho,\quad D=m(L-1).
\end{equation}
Choose the fixed integer $m\ge1$ as in \eqref{eq:kernel-order}.
Lemma~\ref{lem:kernel-parameters} then supplies a common kernel and the
interval polynomials of Lemma~\ref{lem:interval}, uniformly for
$1\le v\le Z^2\le z^2$. Their widths are $\eta/v$ and $\eta$, their
constant-coefficient errors are $t=3\rho$, and their degree is at most
$D=m(L-1)$. Thus $L$ is the base kernel bandwidth and $D$ is the degree
used in every frequency-box estimate below. For all sufficiently large
$X$, the lemma gives
\[
0<t\le\frac{\eta}{v}\le\eta<\frac12,\qquad
1\le D\le\frac{2m z^{20}}\eta,\qquad
2vD\le Q,\qquad D<Q
\quad(1\le v\le Z^2).
\]
The explicit degree bound follows from $L\le2z^{20}/\eta$ in its proof.
In particular the relevant separation exponent is
\begin{equation}\label{eq:separationexp}
\frac{z^2D}{Q}
\le\frac{2m z^{22}}{\eta Q}
=\frac m{10}X^{22\delta-(1-\kappa)/2}\longrightarrow0,
\qquad
22\delta-\frac{1-\kappa}{2}=\frac{-78+34\kappa}{200}<0.
\end{equation}
Also $z^2/X=X^{2\delta-1}\to0$, so
$Z^2\le N$. In particular $\lfloor N/u\rfloor\ge1$ whenever
$1\le u\le Z^2$. Finally, we may assume
\begin{equation}\label{eq:sieve-counting-numerics}
N\le\frac{8X}{A},\qquad \log X\ge1,\qquad
\log(2N)\le2\log X.
\end{equation}
The first inequality follows from $\lceil y\rceil\le2y$ for $y\ge1$.
For the last inequality, take $X\ge16/A$ and use
$2N\le16X/A\le X^2$. These conditions depend only on the fixed parameters,
and hold simultaneously for every sufficiently large convergent scale.

Use the explicit weights in Lemma~\ref{lem:sieve} at cutoff $z$, and define
\[
W(k)=\left(\sum_{\substack{1\le d\le Z\\d\mid k}}\lambda_d\right)^2
\qquad(k\in\ZZ,\ k\ge0).
\]
This definition also covers $k=0$, since every positive integer divides
zero and the sum is finite. We have $W(k)\ge0$ for every such $k$.
If $k>z$ is prime, the only divisor present in the sum is $1$, and
$W(k)=1$.

The choice $0<\delta<1$ implies $z=o(X)$. Hence, for sufficiently large
$X$, the primes in every triple counted by $F_X(x)$ satisfy
\[
p\ge\frac{X}{2B}>z,\qquad
r>px-\eta\ge\frac{AX}{2B}-\eta>z
\qquad(x\in[A,B]).
\]
Moreover, $0<px-r<\eta<1$ implies $r=\lfloor px\rfloor$, and the
same inequalities determine $s=\lfloor pcx\rfloor$. Thus a fixed
denominator contributes at most one triple. For $1\le n\le N$ and
$x\in[A,B]$ we have $nx>0$, so $\lfloor nx\rfloor$ is a nonnegative
integer. The two prime squares therefore give the pointwise majorization
\begin{equation}\label{eq:majorant}
F_X(x)\le\sum_{n=1}^{N}
\ind_{(0,\eta)}(\{nx\})\ind_{(0,\eta)}(\{cnx\})
W(n)W(\lfloor nx\rfloor).
\end{equation}
Each counted denominator contributes exactly one to the right-hand side;
all other terms are nonnegative.

Expanding the two squares in \eqref{eq:majorant} gives exactly
\[
\sum_{d_1=1}^{Z}\sum_{d_2=1}^{Z}
\sum_{e_1=1}^{Z}\sum_{e_2=1}^{Z}
\lambda_{d_1}\lambda_{d_2}\lambda_{e_1}\lambda_{e_2}
S_{[d_1,d_2],[e_1,e_2]}(x),
\]
where $S_{u,v}$ is the actual finite count in \eqref{eq:Sdef}. Both
$u=[d_1,d_2]$ and $v=[e_1,e_2]$ are positive integers at most $Z^2$.
Define, without making any further choice,
\[
R_{u,v}(x)=S_{u,v}(x)-\frac{N\eta^2}{uv}.
\]
The total main term factors into two copies of the quadratic identity
\eqref{eq:weightprops}. By \eqref{eq:G-elementary} and
\eqref{eq:sieve-counting-numerics}, it satisfies
\begin{equation}\label{eq:sievemain}
N\eta^2\left(\sum_{d=1}^{Z}\sum_{e=1}^{Z}
\frac{\lambda_d\lambda_e}{[d,e]}\right)^2
=\frac{N\eta^2}{\mathcal G(z)^2}
\le\frac{32}{A\delta^2}\frac{X\eta^2}{(\log X)^2}
=KG_X.
\end{equation}
In particular the constant $K=32/(A\delta^2)$ is strictly positive.

Write $K_{A,B}=(B-A+1)(1+1/\log2)$. The general mass-error form of
Lemma~\ref{lem:rectangle}, with degree $D$ and mass error $t$, now gives
\begin{align}
\int_A^B|R_{u,v}(x)|\,dx
&\le5(B-A)N\eta t+(B-A)\notag\\
&\quad+216\eta^2K_{A,B}\log(2N)
\left(\frac{D^2}{v}+\frac{DQ}{u}\right)\notag\\
&\le5(B-A)N\eta t+(B-A)
+432\eta^2K_{A,B}\log(2N)DQ.
\label{eq:uniformR}
\end{align}
The final inequality uses $u,v\ge1$ and $D\le Q$. No condition
$t\le1/D$ is required: the constant-coefficient error is bounded using
$t\le\eta/v$, as verified above.

Define the concrete nonnegative remainder
\begin{equation}\label{eq:Jdefinition}
J_X(x)=\sum_{d_1=1}^{Z}\sum_{d_2=1}^{Z}
\sum_{e_1=1}^{Z}\sum_{e_2=1}^{Z}
|\lambda_{d_1}\lambda_{d_2}\lambda_{e_1}\lambda_{e_2}|
\,|R_{[d_1,d_2],[e_1,e_2]}(x)|.
\end{equation}
The maps $x\mapsto\lfloor nx\rfloor$ and $x\mapsto\{nx\}$ are Borel
measurable. Each $S_{u,v}$ is therefore a measurable finite sum of
indicators, bounded between $0$ and $N$. It follows that every
$R_{u,v}$ is measurable and bounded on $[A,B]$, and hence integrable there.
The finite sum \eqref{eq:Jdefinition} is likewise measurable, integrable,
and nonnegative. Taking absolute values only in the remainder terms of
the exact expansion proves $F_X(x)\le KG_X+J_X(x)$ pointwise.

Since $|\lambda_d|\le d\le Z$ on the summation range,
\[
\sum_{d_1=1}^{Z}\sum_{d_2=1}^{Z}
\sum_{e_1=1}^{Z}\sum_{e_2=1}^{Z}
|\lambda_{d_1}\lambda_{d_2}\lambda_{e_1}\lambda_{e_2}|
=\left(\sum_{d=1}^{Z}|\lambda_d|\right)^4\le Z^8\le z^8.
\]
Integrating \eqref{eq:Jdefinition} term by term and using
\eqref{eq:uniformR}, $t=3\eta/z^{14}$, $D\le2m z^{20}/\eta$, and
\eqref{eq:sieve-counting-numerics}, we obtain
\begin{align}
\int_A^B J_X(x)\,dx
&\le\frac{120(B-A)}{A}\frac{X\eta^2}{z^6}
+(B-A)z^8
+1728mK_{A,B}\eta Qz^{28}(\log X)^2\notag\\
&\le C_{A,B,m}\left(
\frac{X\eta^2}{z^6}+z^8+\eta Qz^{28}(\log X)^2\right),
\label{eq:Jbound}
\end{align}
where one possible positive constant is
$C_{A,B,m}=120(B-A)/A+(B-A)+1728mK_{A,B}$.
For the last term we used $\log X\le(\log X)^2$ after applying
$\log(2N)\le2\log X$; the resulting slightly weaker bound is convenient
for a single normalization.

Let
\[
E_X=\frac{X\eta^2}{z^6}+z^8+\eta Qz^{28}(\log X)^2.
\]
Using $\eta=20X^{-\kappa}$ and $Q=X^{(1+\kappa)/2}$ gives the exact ratio
\begin{align*}
\frac{E_X}{G_X}
&=X^{-6\delta}(\log X)^2
+\frac1{400}X^{8\delta-1+2\kappa}(\log X)^2\\
&\quad+\frac1{20}X^{28\delta+(3\kappa-1)/2}(\log X)^4.
\end{align*}
The first exponent is negative because $\delta>0$, and
\[
8\delta-1+2\kappa=\frac{47\kappa-24}{25}< -\frac13.
\]
For the last exponent,
\begin{equation}\label{eq:criticalexp}
28\delta+\frac{3\kappa-1}{2}
=-\frac9{25}(1-3\kappa)<0.
\end{equation}
Every term in the ratio thus tends to zero. Together with
\eqref{eq:sievemain} and \eqref{eq:Jbound}, this proves
\eqref{eq:upper}.
\end{proof}

\section{Completion of the metric argument}

We first state the measure-theoretic step with its integrability
requirements explicit. Upper bounds are allowed to hold only after
discarding finitely many indices. The threshold in a local lower bound
may depend on the interval.

\begin{lemma}\label{lem:metric}
Let $I_0=[A,B]$, where $A<B$. Let $f_j:I_0\to[0,\infty)$ be measurable,
let $g_j>0$, and let $J_j:I_0\to[0,\infty)$ be measurable. Suppose that
$K\ge0$ and $c_0>0$ are constants such that, for all sufficiently large
$j$, $J_j$ is Lebesgue integrable on $I_0$ and
\[
f_j\le Kg_j+J_j\quad\hbox{almost everywhere on }I_0,
\qquad
\frac1{g_j}\int_{I_0}J_j(x)\,dx\longrightarrow0.
\]
Suppose also that, for every fixed nondegenerate closed subinterval
$I\subset I_0$,
\[
\liminf_{j\to\infty}\frac1{g_j}\int_I f_j(x)\,dx\ge c_0|I|.
\]
If $E\subset I_0$ is measurable and, for every $x\in E$, one has
$f_j(x)=0$ for all sufficiently large $j$, then $|E|=0$.
\end{lemma}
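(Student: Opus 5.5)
We argue by contradiction. Suppose $|E|>0$. The first step is a truncation. Put $h_j=\min(f_j/g_j,K)$, so that $0\le h_j\le K$. From $f_j\le Kg_j+J_j$ almost everywhere we get
\[
\frac{f_j}{g_j}-h_j\le\frac{J_j}{g_j}
\]
almost everywhere: either the left side is $0$, or $f_j/g_j>K$ and the difference is $f_j/g_j-K\le J_j/g_j$. Hence, for every subinterval $I\subset I_0$,
\[
\int_I h_j\ge\frac1{g_j}\int_I f_j-\frac1{g_j}\int_{I_0}J_j.
\]
The last term tends to zero, so
\[
\liminf_{j\to\infty}\int_I h_j\ge c_0|I|
\]
for every fixed nondegenerate closed $I$. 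The point of truncating is that the $h_j$ are now uniformly bounded. This makes their integrals over small sets uniformly small, whereas $f_j/g_j$ alone has no such control. Note also that $K\ge c_0$ necessarily, although we do not need this.

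The second step is to find an interval on which $E$ has almost full density and $h_j$ vanishes on most of $E$. Set
\[
E_k=\{x\in E: f_j(x)=0\text{ for all }j\ge k\}.
\]
These sets are measurable, increase in $k$, and have union $E$. Choose $k$ with $|E_k|>0$. By the Lebesgue density theorem, pick a density point of $E_k$ in the interior of $I_0$. Then for any $\theta>0$ there is a nondegenerate closed interval $I\subset I_0$ with $|I\setminus E_k|\le\theta|I|$. For $j\ge k$ we have $h_j=0$ on $E_k$, so
\[
\int_I h_j=\int_{I\setminus E_k}h_j\le K\theta|I|.
\]

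The third step produces the contradiction. Choose $\theta<c_0/(2K)$, or take any $\theta$ at all if $K=0$, in which case the bound above is $0$. Then for all $j\ge k$,
\[
\int_I h_j\le\tfrac{c_0}{2}|I|,
\]
which contradicts $\liminf_j\int_I h_j\ge c_0|I|$ since $c_0>0$ and $|I|>0$. Hence $|E|=0$.

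The only delicate point is the order of choices. The interval $I$ must be fixed before taking the liminf, because the lower-bound threshold depends on $I$. This is why $I$ is chosen from the density point first, and only afterwards is $j\to\infty$ taken for that fixed $I$. The hypotheses that hold only for large $j$, and only almost everywhere, cause no trouble, since we take limits in $j$ and every estimate is an integral inequality. We also do not need $J_j$ integrable on subintervals separately, as $J_j\ge0$ gives $\int_I J_j\le\int_{I_0}J_j$.
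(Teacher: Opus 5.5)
Your proof is correct and follows the paper's argument: you use the same truncation $h_j=\min\{f_j/g_j,K\}$ to transfer the local lower bound, followed by a Lebesgue density-point contradiction. The only difference is in the middle step. The paper applies dominated convergence on $I\cap E$ to get $c_0|I|\le K|I\setminus E|$ for every $I$ and then takes a density point of $E$. You instead exhaust $E$ by the sets $E_k$, on which $h_j$ vanishes identically for $j\ge k$, and take a density point of one $E_k$. This avoids the limit theorem at the cost of using exact eventual vanishing rather than merely $h_j\to0$ on $E$.
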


\begin{proof}
Discard a finite initial segment so that the integrability and upper-bound
hypotheses hold for every remaining index, and reindex this tail by
$j\in\NN$. This does not change the limiting hypotheses or eventual
vanishing on $E$. The upper bound, nonnegativity, and integrability of
$J_j$ show that $f_j$ is also integrable on $I_0$: its absolute value
is dominated almost everywhere by the integrable function $Kg_j+J_j$.
Thus all integrals below are ordinary finite Lebesgue integrals.

Set
\[
h_j(x)=\min\{f_j(x)/g_j,K\}.
\]
These functions are measurable and satisfy $0\le h_j\le K$ everywhere.
Almost everywhere on $I_0$,
\[
0\le\frac{f_j}{g_j}-h_j\le\frac{J_j}{g_j}.
\]
Consequently, for every fixed subinterval $I$,
\[
0\le\frac1{g_j}\int_I f_j-\int_I h_j
\le\frac1{g_j}\int_{I_0}J_j\longrightarrow0,
\]
and hence
\[
\liminf_{j\to\infty}\int_Ih_j\ge c_0|I|.
\]
Equivalently, for every $\epsilon>0$, the inequality
$\int_I h_j\ge c_0|I|-\epsilon$ holds eventually, with a threshold
that may depend on $I$ and $\epsilon$.

For $x\in E$, eventual vanishing of $f_j(x)$ implies $h_j(x)\to0$.
The constant function $K$ is integrable on the finite-measure set
$I\cap E$, so dominated convergence gives
\[
\int_{I\cap E}h_j\longrightarrow0.
\]
Since $\int_{I\setminus E}h_j\le K|I\setminus E|$, we obtain
\begin{equation}\label{eq:density-complement}
c_0|I|\le K|I\setminus E|
\end{equation}
for every nondegenerate closed subinterval $I\subset I_0$.
If $|E|>0$, the Lebesgue density theorem supplies a density point
$x\in E\cap(A,B)$, since the two endpoints have measure zero. For
$I_r=[x-r,x+r]\subset I_0$, division of
\eqref{eq:density-complement} by $2r$ and passage to $r\downarrow0$
would give $c_0\le0$, a contradiction.
\end{proof}

\begin{proof}[Proof of Theorem~\ref{thm:main}]
Fix an irrational $c>1$ and invoke the prime-counting input stated in
Lemma~\ref{lem:matomaki}. We first prove the conclusion for one fixed
$\varepsilon$ with $0<\varepsilon<1/3$. Set
\[
\nu=\frac13-\varepsilon,\qquad
\nu<\kappa<\frac13,
\]
and fix $0<A<B$. Enumerate the convergents to $c$ as $a_j/Q_j$ and
take the actual scales
\[
X_j=Q_j^{2/(1+\kappa)}\longrightarrow\infty.
\]
All statements about sufficiently large scales in the preceding
propositions refer to this sequence.

When $\eta<1$, each prime denominator $p$ determines at most one
admissible pair $(r,s)$ in \eqref{eq:FX}. Thus the measurable function
$F_X$ counts denominators and triples with the same multiplicity. Its
finite-sum representation uses $1\le p\le\lceil4X/A\rceil$, retaining
$p\le4X/A$ as an exact condition in each summand. In particular,
$0\le F_X\le N$ and $F_X$ is integrable on $[A,B]$.

Proposition~\ref{prop:upper} supplies nonnegative integrable functions
$J_{X_j}$ and a constant $K$ such that eventually
\[
F_{X_j}\le K G_{X_j}+J_{X_j}
\quad\hbox{almost everywhere on }[A,B],\qquad
\frac1{G_{X_j}}\int_A^B J_{X_j}\longrightarrow0.
\]
In the construction, each $J_X$ is a finite sum of absolute values of
the measurable, integrable remainders $R_{u,v}$, multiplied by
nonnegative constant weight factors. Thus $J_X\in L^1([A,B])$.

Choose a single index $j_0$ so that for every $j\ge j_0$ one has
$X_j>1$, $\eta_j<1$, and all the upper-estimate and error-estimate
conditions hold. Define
\[
f_j=F_{X_{j+j_0}},\qquad
g_j=G_{X_{j+j_0}}>0,\qquad
J_j=J_{X_{j+j_0}}.
\]
These sequences satisfy the upper and integrability conditions of Lemma~\ref{lem:metric}
at every index. Proposition~\ref{prop:lower} gives, for every fixed
nondegenerate closed interval $I\subset[A,B]$,
\[
\int_I f_j\ge c_0|I|g_j
\quad\hbox{for all sufficiently large }j.
\]
The constant $c_0>0$ is independent of $I$. No common starting index
for all subintervals is required: the finite shift preserves each of
their separate eventual lower bounds.

Uniformly for the allowed denominators, $p\le(4/A)X$ and $\nu>0$
give
\[
p^{-\nu}\ge(4/A)^{-\nu}X^{-\nu},\qquad
\frac{20X^{-\kappa}}{(4/A)^{-\nu}X^{-\nu}}
=20(4/A)^\nu X^{\nu-\kappa}\longrightarrow0.
\]
Hence every triple counted by $F_X$ satisfies \eqref{eq:main} once
$X$ is sufficiently large.

Let $E\subset[A,B]$ consist of points with only finitely many triples
satisfying \eqref{eq:main} for this $\varepsilon$. This set is measurable.
Indeed, the triple set $\PP\times\PP\times\ZZ$ is countable; enumerate
its distinct elements as $t_1,t_2,\ldots$, and let $V_k$ be the open set
on which the strict inequalities for $t_k$ hold. The points with
infinitely many solutions form the Borel set
\[
\bigcap_{m=1}^\infty\ \bigcup_{k\ge m}V_k.
\]
For any fixed $x\in E$, the finitely many target triples have bounded
denominators. Since each denominator counted by $F_X(x)$ satisfies
$p\ge X/(2B)\to\infty$, the preceding comparison forces $F_X(x)=0$
for all sufficiently large scales. Lemma~\ref{lem:metric} now gives
$|E|=0$.

The intervals $[1/m,m]$, $m\ge2$, cover $(0,\infty)$, so a countable
union of the preceding exceptional sets proves the assertion on the
positive half-line for this fixed $\varepsilon\in(0,1/3)$. For an
arbitrary $\varepsilon>0$, choose
$0<\varepsilon'<\min\{\varepsilon,1/3\}$. Since $p\ge2$,
\[
p^{-1/3+\varepsilon'}\le p^{-1/3+\varepsilon},
\]
and every solution at $\varepsilon'$ is a solution at $\varepsilon$.
Thus the conclusion holds almost everywhere for each fixed positive
$\varepsilon$.

Finally, apply this conclusion to the countable sequence
$\varepsilon_n=1/(n+1)$, $n\ge0$. Let $E_c$ be the union of the
corresponding exceptional null sets. If $x>0$ and $x\notin E_c$, then
for any $\varepsilon>0$ we may choose $n$ with
$\varepsilon_n\le\varepsilon$ and use the same monotonicity to obtain
infinitely many solutions of \eqref{eq:main}. Therefore, for each fixed
irrational $c>1$, a single exceptional null set works simultaneously
for every $\varepsilon>0$, as asserted. The exceptional set is allowed
to depend on $c$.
\end{proof}

\section*{Acknowledgements}
I am deeply grateful to my advisor, Dr. Jing-Jing Huang,
for introducing me to the wonderland of the metric Diophantine
approximation and for his continued guidance and encouragement.

\end{document}